\documentclass{amsart}
\usepackage{amsthm}
\usepackage{amsmath}
\usepackage{amssymb}
\usepackage{comment}
\usepackage{enumerate}
\usepackage{amsfonts}
\usepackage{mathtools}
\usepackage{afterpage}
\usepackage{hyperref}
\usepackage{todonotes}
\usepackage{comment}
\usepackage{algorithm,algorithmic}
\usepackage{multicol}
\usepackage{tikz-cd}
\usepackage[utf8]{inputenc}

\newtheorem{theorem}{Theorem}[section]
\newtheorem{def-prop}[theorem]{Definition-Proposition}

\newtheorem{lemma}[theorem]{Lemma}

\theoremstyle{definition}
\newtheorem{ex}[theorem]{Example}
\newtheorem{quest}[theorem]{Question}
\newtheorem{defin}[theorem]{Definition}

\theoremstyle{remark}
\newtheorem*{remark}{Remark}

\def\D{\mathsf{D}}
\def\E{\mathsf{E}}
\def\pop{\operatorname{pop}}
\def\wt{\operatorname{wt}}

\def\perm{\operatorname{perm}}
\def\DES{\operatorname{Des}}
\def\BPD{\operatorname{BPD}}
\def\PD{\operatorname{PD}}

\def\blank{\operatorname{blank}}
\def\cross{\operatorname{cross}}
\def\Red{\operatorname{Red}}
\def\+{\includegraphics[scale=0.4]{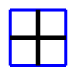}}
\def\bl{\includegraphics[scale=0.4]{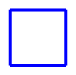}}
\def\bt{\includegraphics[scale=0.4]{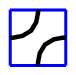}}
\def\rt{\includegraphics[scale=0.4]{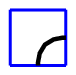}}
\def\jt{\includegraphics[scale=0.4]{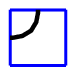}}
\def\elbow{\includegraphics[scale=0.4]{bump.eps}}
\def\vtile{\includegraphics[scale=0.4]{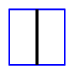}}
\def\htile{\includegraphics[scale=0.4]{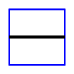}}
\def\mindroop{\textsf{min-droop}}
\def\cbswap{\textsf{cross-bump-swap}}
\setcounter{MaxMatrixCols}{20}

\renewcommand{\S}{\mathfrak{S}}

\newcommand{\Z}{\mathbb{Z}}

\title[The canonical bijection between PDs and BPDs]{The canonical bijection between pipe dreams and bumpless pipe dreams}
\author{Yibo Gao}
\author{Daoji Huang}

\date{\today}

\begin{document}

\begin{abstract}
We present a direct bijection between reduced pipe dreams and reduced bumpless pipe dreams by interpreting reduced compatible sequences on bumpless pipe dreams and show that this bijection preserves Monk's formula, establishing its canonical nature.
\end{abstract}
\maketitle

\section{Introduction}
Schubert polynomials, $\{\S_\pi\:|\: \pi\in S_n\}$, defined by Lascoux and Sch\"utzenberger \cite{lascoux1982polynomes}, are polynomial representatives of the Schubert classes of the full flag variety $\mathrm{Fl}_n$. They can be defined via the divided difference operators as follows:
\[\S_\pi:=
\begin{cases}
x_1^{n-1}x_2^{n-2}\cdots x_{n-1} & \text{ if }w=n\ n{-}1\ \cdots\ 1,\\
\partial_i \S_{\pi s_i} & \text{ if } \ell(\pi)<\ell(\pi s_i),
\end{cases}
\]
where $s_i$ is the simple transposition $(i\ i{+}1)$, which acts on polynomials by swapping the variables $x_i$ and $x_{i+1}$, and 
\[\partial_i f:=\frac{f-s_if}{x_i-x_{i+1}}.\]

Schubert polynomials and their various generalizations are the central objects of interest in Schubert calculus and algebraic combinatorics, and possess very rich combinatorial structures. The Schubert polynomials $\S_\pi$'s expand positively into monomials. Notable combinatorial models for this result involve \textit{compatible sequences} by Billey-Jockusch-Stanley \cite{BJS}, which are equivalent to \textit{pipe dreams} (PDs), also called \textit{RC-graphs}, by Bergeron-Billey \cite{bergeron1993rcgraph}.
\begin{theorem}[\cite{bergeron1993rcgraph, BJS}]\label{thm:Schubert-PD}
Let $\pi$ be a permutation, then $\S_\pi=\sum_{D\in\PD(\pi)}\wt(D).$
\end{theorem}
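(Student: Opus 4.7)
The plan is to prove the theorem by descending induction on $\ell(\pi)$, showing that the pipe dream generating function
\[
F_\pi \;:=\; \sum_{D \in \PD(\pi)} \wt(D)
\]
satisfies the same recursion as $\S_\pi$. For the base case $\pi = w_0 = n\ n{-}1\ \cdots\ 1$, I would verify directly that $\PD(w_0)$ consists of a single element, the staircase pipe dream with crosses at exactly the cells $(i,j)$ with $i+j \le n$, and that its weight is $x_1^{n-1} x_2^{n-2} \cdots x_{n-1}$, matching the initial condition for $\S_{w_0}$.

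For the inductive step, pick an ascent $i$ of $\pi$ so that $\ell(\pi s_i) = \ell(\pi) + 1$. By the induction hypothesis $F_{\pi s_i} = \S_{\pi s_i}$, and since $\S_\pi = \partial_i \S_{\pi s_i}$ by definition, the theorem reduces to the combinatorial identity
\[
F_\pi \;=\; \partial_i F_{\pi s_i}.
\]
To establish this, I would partition $\PD(\pi s_i)$ into classes determined by the configuration of crosses in rows $i$ and $i+1$; within each class, the weight factors as a fixed monomial in the remaining variables times a polynomial in $x_i, x_{i+1}$. Applying $\partial_i$ via the expansion $\partial_i(x_i^a) = x_i^{a-1} + x_i^{a-2}x_{i+1} + \cdots + x_{i+1}^{a-1}$ then produces exactly the weight-generating function of a matching family of pipe dreams in $\PD(\pi)$, obtained from the corresponding class in $\PD(\pi s_i)$ by a chute/ladder move that ``undoes'' the crossing responsible for $\pi s_i$ being longer than $\pi$ and redistributes it between rows $i$ and $i+1$.

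The main obstacle is the bookkeeping in this last combinatorial identity. I would need to show that the partition of $\PD(\pi s_i)$ is canonical and complete, that its image under $\partial_i$ covers $\PD(\pi)$ exactly once, and crucially that every filling produced is a genuinely reduced pipe dream for $\pi$, i.e., that no two pipes cross twice after the ladder move. Following Bergeron--Billey, I would handle this by a careful local analysis at the two pipes whose crossing is affected, using chute-/ladder-move compatibility to rule out redundant crossings and to check that the correspondence between classes in $\PD(\pi s_i)$ and $\PD(\pi)$ is bijective.
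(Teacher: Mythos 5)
The paper does not prove this statement at all: Theorem~\ref{thm:Schubert-PD} is quoted as known background, with the proof deferred to \cite{bergeron1993rcgraph, BJS}, so there is no in-paper argument to compare yours against. Your outline follows the standard Bergeron--Billey strategy (descending induction on $\ell(\pi)$ from the staircase pipe dream for $w_0$, reducing to the identity $F_\pi=\partial_i F_{\pi s_i}$), which is a legitimate route to the theorem.

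That said, as written there is a genuine gap precisely at the step you flag as ``bookkeeping.'' The identity $\partial_i(x_i^a)=x_i^{a-1}+x_i^{a-2}x_{i+1}+\cdots+x_{i+1}^{a-1}$ only covers monomials in which $x_{i+1}$ does not appear; for a general class of pipe dreams the rows $i$ and $i+1$ contribute a monomial $x_i^a x_{i+1}^b$ with $b>0$, and then $\partial_i(x_i^a x_{i+1}^b)$ is zero when $a=b$ and \emph{negative} when $a<b$. So one cannot simply ``apply $\partial_i$ class by class and read off pipe dreams'': one must either organize the classes so that the negative and zero contributions cancel in pairs, or (as Bergeron--Billey and, later, Knutson--Miller's mitosis do) restrict attention to the crosses in rows $i,i+1$ weakly left of the first elbow in row $i$ and give an explicit combinatorial rule producing the elements of $\PD(\pi)$ from those of $\PD(\pi s_i)$, then verify reducedness and bijectivity. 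Your proposal names the right ingredients (chute/ladder moves, a local analysis of the two affected pipes) but does not specify the rule, the cancellation mechanism, or why every resulting filling is reduced; since that is the entire content of the theorem, the argument is a plan rather than a proof until that step is carried out.
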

We will introduce the necessary notations in Section~\ref{sec:preliminaries}.

Relatively recently, Lam, Lee and Shimozono \cite{LLS} introduced \textit{bumpless pipe dreams} (BPDs), in the context of back stable Schubert calculus, that provide another monomial expansion of the Schubert polynomials.
\begin{theorem}[\cite{LLS}]\label{thm:Schubert_BPD}
Let $\pi$ be a permutation, then $\S_\pi=\sum_{D\in\BPD(\pi)}\wt(D).$
\end{theorem}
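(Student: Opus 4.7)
The plan is to prove that the weight generating function $F_\pi := \sum_{D \in \BPD(\pi)} \wt(D)$ satisfies the defining recursion of the Schubert polynomials, namely the base case $F_{w_0} = x_1^{n-1}x_2^{n-2}\cdots x_{n-1}$ and the divided-difference step $\partial_i F_{\pi s_i} = F_\pi$ whenever $\ell(\pi) < \ell(\pi s_i)$. Downward induction from $w_0$ then forces $F_\pi = \S_\pi$ for every $\pi \in S_n$.

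For the base case, I would first argue that $\BPD(w_0)$ consists of a single diagram, the staircase BPD in which blank cells occupy exactly the region $\{(r,c) : r+c \leq n\}$, J-tiles sit on the antidiagonal, and no crosses appear. Counting blanks row-by-row immediately yields weight $\prod_{r=1}^{n-1} x_r^{n-r}$.

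The bulk of the work is in the divided-difference step. Given $D \in \BPD(\pi s_i)$, I would focus on rows $i$ and $i+1$ and locate the leftmost column $c$ at which the tile configurations in these two rows genuinely diverge --- heuristically, where the two pipes destined for positions $i$ and $i+1$ separate. Using this column, I would partition $\BPD(\pi s_i)$ into three classes:
\begin{enumerate}[(a)]
\item pairs $\{D, D'\}$ related by a local involution swapping the row-$i$ and row-$(i+1)$ data to the left of $c$, with $D'$ still a BPD for $\pi s_i$; the two weights are related by the transposition $x_i \leftrightarrow x_{i+1}$ and thus cancel under $\partial_i$;
\item diagrams fixed by the involution, whose monomial weight is already $s_i$-symmetric in $(x_i, x_{i+1})$ and hence annihilated by $\partial_i$;
\item a distinguished class of ``crossable'' diagrams, on which performing a single local uncrossing move (a cross-to-bump swap at the critical column) produces a diagram in $\BPD(\pi)$, and on which $\partial_i$ contributes exactly one surviving monomial equal to $\wt$ of the resulting BPD of $\pi$.
\end{enumerate}
One then shows that the map in (c) is a weight-compatible bijection onto $\BPD(\pi)$.

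The hardest step will be the combinatorial bookkeeping in class (c): I will need to verify that the cross-to-bump modification indeed shortens the permutation from $\pi s_i$ to $\pi$ (rather than to some other permutation), that every $E \in \BPD(\pi)$ is produced exactly once, and that the row-swap involution is globally well-defined despite the many possible tile configurations in rows $i$ and $i+1$. A careful case analysis on the tiles immediately to the left of $c$ --- distinguishing whether each row's leading content is a J-tile, R-tile, cross, or blank --- together with tracking the two marker pipes across the involution, should resolve all cases and complete the induction.
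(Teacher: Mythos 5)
The paper does not actually prove this statement: Theorem~\ref{thm:Schubert_BPD} is imported from Lam--Lee--Shimozono \cite{LLS} as background, so there is no in-paper argument to compare yours against. Your proposed route (verify $F_{w_0}=x_1^{n-1}\cdots x_{n-1}$ and $\partial_i F_{\pi s_i}=F_\pi$, then induct down from $w_0$) is a legitimate strategy in principle, and the base case is essentially right: $\BPD(w_0)$ is a single diagram with blank tiles exactly on $\{(r,c): r+c\le n\}$, giving the staircase monomial. One factual slip there: that diagram is not cross-free. In a BPD, pipes $i<j$ cross exactly when $\pi(i)>\pi(j)$, so the unique BPD of $w_0$ carries $\binom{n}{2}$ cross-tiles; this does not affect the weight, which only counts blanks.

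The genuine gap is in the divided-difference step, where the entire argument rests on the ``local involution swapping the row-$i$ and row-$(i+1)$ data to the left of $c$,'' which you assert but do not construct. For ordinary pipe dreams such an involution is feasible because a pipe dream is determined by an arbitrary set of cross positions (subject only to reducedness), so partial row contents can be exchanged and repaired locally. A bumpless pipe dream, by contrast, is a global routing of pipes with six admissible tiles: exchanging the tiles of rows $i$ and $i+1$ on an initial segment almost never yields a valid tiling, since the pipes entering those rows from below and exiting above need not reconnect, and the tile constraints break at the column where you cut. Making classes (a)--(c) precise would require, for each boundary configuration, a nontrivial pipe re-routing, together with proofs that the map is an involution, that it swaps $x_i\leftrightarrow x_{i+1}$ in the weight, and that the unmatched diagrams biject with $\BPD(\pi)$; that is the whole content of the theorem rather than ``bookkeeping.'' It is telling that the known proofs --- the back-stable argument of \cite{LLS}, Weigandt's argument via alternating sign matrices, and the Monk/transition recursion on BPDs of \cite{huang2020bijective} on which the present paper relies --- all avoid a direct row-swap involution. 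As written, the proposal supplies the standard $\partial_i$ template without the combinatorial mechanism that would make it work for bumpless pipe dreams.
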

Theorem~\ref{thm:Schubert-PD} and Theorem~\ref{thm:Schubert_BPD} guarantee that there exists a weight-preserving bijection between pipe dreams and bumpless pipe dreams of a fixed permutation. The following question has been of interest since the discovery of bumpless pipe dreams.
\begin{quest}\label{que:main}
Can we explicitly describe a ``natural" weight-preserving bijection between $\PD(\pi)$ and $\BPD(\pi)$ for any permutation $\pi$?
\end{quest}
The goal of this paper is to answer the above question affirmatively, by providing such a bijection $\varphi$ (Definition~\ref{def:bijection}). We then show that our bijection is the ``correct" bijection in a precise sense by proving that $\varphi$ preserves Monk's rule (Theorem~\ref{thm:preserve-monk}). 

One notable consequence of Theorem~\ref{thm:preserve-monk} is that it unifies all attempts to define bijections between pipe dreams and bumpless pipe dreams via any choices of Monk's rules. Specifically, the second author mentioned a family of bijections \cite{huang2020bijective} between pipe dreams and bumpless pipe dreams defined using (co)transition formulas which are special cases of Monk's rule. Each bijection in this family depends on a choice $\alpha$ for each permutation $\pi$ upon which the (co)transition is performed. By Theorem~\ref{thm:preserve-monk}, we now know that this family of bijections is in fact one single bijection (Definition~\ref{def:bijection}). 

For the organization of this paper, we first introduce background knowledge in Section~\ref{sec:preliminaries}. Then we describe the bijection in Section~\ref{sec:bijection}. Finally, we show that the bijection preserves Monk's rule in Section~\ref{sec:Monk}, which is divided into many subsections due to the technicality the proof of our main theorem, Theorem~\ref{thm:preserve-monk}.

\section{Preliminaries}\label{sec:preliminaries}
Let $S_{\infty}$ be the infinite symmetric group that consists of bijections from $\Z_{>0}$ to itself with all but finitely many fixed points. Namely, $S_\infty=\bigcup_{n\ge 1} S_n$. The group $S_{\infty}$ is generated by the simple transpositions $s_i:=(i\ i{+}1)$, for $i\in\Z_{>0}$. For $a<b$, let $t_{a,b}$ be the transposition $(a\ b)$. Let $\ell(w)$ be the standard Coxeter length of $w$ and let $\Red(w)$ be the set of reduced words of $w$. 

The Bruhat order on $S_{\infty}$ is the transitive closure of the binary relation $w<wt_{a,b}$ if $\ell(w)<\ell(wt_{a,b})$. The cover relation in the Bruhat order is given by $x\lessdot y$ if $x=yt_{a,b}$ for some $a<b$ and $\ell(x)=\ell(y)-1$. Also, let $\DES_L(\pi):=\{i\in \Z_{>0}\:|\: s_i \pi\lessdot \pi\}$ be the set of (left) descents of $\pi$. 

The stability property of the Schubert polynomials allows us to work with permutations $\pi\in S_{\infty}$, instead of each symmetric group $S_n$ separately. Recall that the stability property states that for any $m<n$, $\pi\in S_m$, we have $\S_\pi=\S_{\iota(\pi)}$, where $\iota:S_m\hookrightarrow S_n$ is the natural embedding.

\subsection{Pipe dreams and compatible sequences}
\begin{defin}[Pipe dream, RC-graph \cite{bergeron1993rcgraph}]\label{def:pipe-dream}
For a permutation $\pi\in S_{\infty}$, a (reduced) \textit{pipe dream} $D$ of $\pi$ is a tiling of the square grid $\Z_{>0}\times\Z_{>0}$ using two kinds of tiles, the cross-tile \+ and the elbow-tile \elbow, with finitely many \+-tiles, forming pipes that travel from the north border to the west border, such that for $i\in\Z_{>0}$, the pipe starting at column $i$, which is labeled as pipe $i$, ends at row $\pi(i)$, and  no two pipes cross twice. 
\end{defin}

We typically draw a pipe dream in a staircase shape of size $n$ if $\pi\in S_n$. Let $\PD(\pi)$ be the set of pipe dreams of $\pi$, and for $D\in\PD(\pi)$, let $\cross(D)$ denote the coordinates of its \+-tiles. Then the \textit{weight} of $D$ is defined as \[\wt(D):=\prod_{(i,j)\in\cross(D)}x_i.\] 
Given a pipe dream $D$, we let $\perm(D)$ denote the permutation given by $D$.
\begin{defin}[Compatible sequence \cite{BJS}]\label{def:compatible-sequence}
For a permutation $\pi\in S_\infty$ with $\ell(\pi)=\ell$, a tuple of integer sequences $\big(\mathbf{a}=(a_1,\ldots,a_{\ell}),\mathbf{r}=(r_1,\ldots,r_{\ell})\big)$ is a (reduced) \textit{compatible sequence} of $\pi$ if
\begin{enumerate}
\item $\mathbf{a}=(a_1,\ldots,a_{\ell})$ is a reduced word of $\pi$,
\item $r_1\leq\cdots \leq r_{\ell}$ is weakly increasing,
\item $r_j\leq a_j$ for $j=1,\ldots,\ell$,
\item $r_j<r_{j+1}$ if $a_j<a_{j+1}$.
\end{enumerate}
\end{defin}

There is a straightforward bijection between pipe dreams of $\pi$ and compatible sequences of $\pi$, which we describe here.

We label the square grids with matrix notation and give a total order to the square grids, by going from top to bottom, and within each row, right to left. To be precise, $(i,j)<(i',j')$ if $i<i'$ or $i=i',j>j'$. For $D\in\PD(\pi)$ with $\ell(\pi)=\ell$, order its \+-tiles $\cross(D)$ in this way as $(r_1,j_1)<\cdots<(r_{\ell},j_{\ell})$. Then its corresponding compatible sequence is $\big(\mathbf{a}=(a_1,\ldots,a_{\ell}),\mathbf{r}=(r_1,\ldots,r_{\ell})\big)$ where $a_k=r_k+j_k-1$. Note that each \+-tile at coordinate $(i,j)$ corresponds to the simple transposition $s_{i+j-1}$. See \cite{bergeron1993rcgraph} for further details on this bijection. 

\begin{ex}\label{ex:pd}
Figure~\ref{fig:pd-ex1} shows two pipe dreams of $\pi=21543$ of weight $x_1^2x_2x_3$ and their corresponding compatible sequences.
\begin{figure}[h!]
\centering
\includegraphics[scale=0.4]{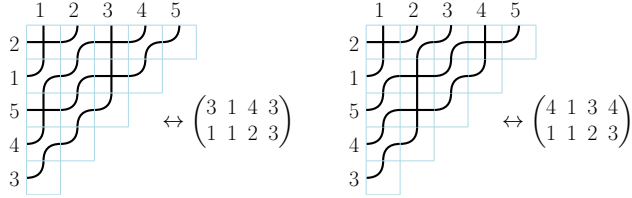}
\caption{Examples of pipe dreams and their corresponding compatible sequences}
\label{fig:pd-ex1}
\end{figure}
\end{ex}

As the bijection between pipe dreams and compatible sequences is quite straightforward, we will abuse notation and from now on, consider them to be the same combinatorial objects for simplicity. 

For a pipe dream $D\in\PD(\pi)$ with compatible sequence $(\mathbf{a},\mathbf{r})$, its \textit{first} \+-tile corresponding to the order on the square grid will play an important role later on, which has coordinate $(r_1,a_1-r_1+1)$. We denote by $\nabla D$ the pipe dream obtained from $D$ by turning this first \+-tile into a \elbow-tile, and write $\pop(D)=(a_1,r_1)$. We have that $\nabla D\in\PD(s_{a_1}\pi)$, where $a_1\in\DES_L(\pi)$ so that $\ell(s_{a_1}\pi)=\ell(\pi)-1$. 

\subsection{Bumpless pipe dreams}
\begin{defin}[Bumpless pipe dream \cite{LLS}]
For a permutation $\pi\in S_{\infty}$, a (reduced) \textit{bumpless pipe dream} $D$ of $\pi$ is a tiling of the square grid $\Z_{>0}\times\Z_{>0}$ using the following six tiles,
\begin{center}
\includegraphics[scale=0.5]{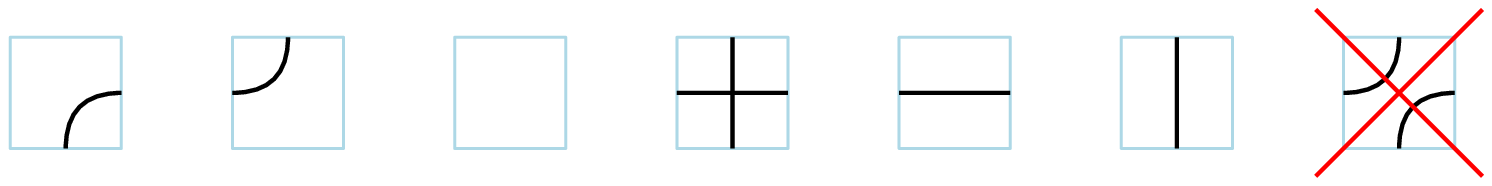}
\end{center}
with finitely many \+-tiles, forming pipes labeled by $\Z_{>0}$ such that pipe $i$ travels from $(\infty,i)$ to $(\pi(i),\infty)$ in the NE direction and that no two pipes cross twice. Among the tiles, ``\rt'' is pronounced ``are'' and ``\jt'' is pronounced ``jay''. The name ``bumpless'' comes from the fact that the  tile ``\bt'' where two pipes ``bump'' is forbidden.
\end{defin}
We typically draw a bumpless pipe dream in a square grid of size $n\times n$ if $\pi\in S_n$. Let $\BPD(\pi)$ denote the set of bumpless pipe dreams of $\pi$, and for $D\in\BPD(\pi)$, let $\cross(D)$ denote the coordinates of its \+-tiles and $\blank(D)$ denote the coordinates of its \bl-tiles. Then the \textit{weight} of $D$ is defined as \[\wt(D):=\prod_{(i,j)\in\blank(D)}x_i.\] 

\section{Description of the bijection}\label{sec:bijection}
We start with the following operation on bumpless pipe dreams, which is defined in the rectification process described in \cite{huang2021schubert}. These moves are generalizations of the backwards direction of the insertion proceess described in \cite[Section 5]{LLS}. We follow the definition of \emph{droop} and \emph{undroop} moves on bumpless pipe dreams as in \cite[Section 5]{LLS}.
\begin{defin}\label{def:pop}
Given $D\in\BPD(\pi)$ with $\ell(\pi)>0$, the following process produces another bumpless pipe dream $\nabla D\in\BPD(\pi')$ where $\ell(\pi')=\ell(\pi)-1$. Suppose $D$ is an $n\times n$ grid. Let $r$ be the smallest row index such that the row $r$ of $D$ contains \bl-tiles. To initialize, mark the rightmost \bl-tile in row $r$ with a label ``$\times$". 
\begin{enumerate}
\item If the marked \bl-tile is not the rightmost \bl-tile in a contiguous block of \bl-tiles in its row, move the label ``$\times$" to the rightmost \bl-tile of this block. Assume the marked tile has coordinate $(x,y)$, and the pipe going through $(x,y+1)$ is $p$. 
\item If $p\neq y+1$, suppose the \jt-tile of $p$ in column $y+1$ has coordinate $(x',y+1)$ for some $x'>x$. Call the rectangle with NW corner $(x,y)$ and SE corner $(x',y+1)$ the \emph{column move rectangle} $U$. We modify the tiles in $U$ as follows:

\begin{enumerate}

    \item For each pipe $q$ intersecting $p$ at some $(z,y+1)$ where $x<z<x'$ and $(z,y)$ is an \rt-tile, let $(z',y)$ be the \jt-tile of $q$ in column $y$. Ignoring the presence of $p$, droop $q$ at $(z,y)$ within $U$, so that $(z,y+1)$ becomes an \rt-tile and $(z',y+1)$ becomes a \jt-tile;
    \item Undroop pipe $p$ at $(x',y+1)$ into $(x,y)$, and move the mark to $(x',y+1)$.
\end{enumerate}
Go back to step (1) and repeat. The column moves are illustrated in Figure \ref{fig:columnmoves}.
\begin{figure}[h!]
\centering
\includegraphics[scale=0.3]{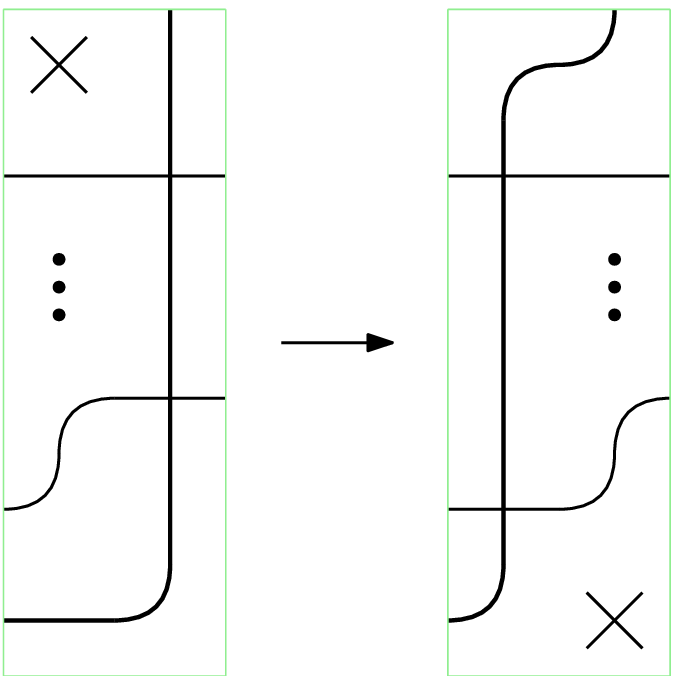}
\qquad
\includegraphics[scale=0.3]{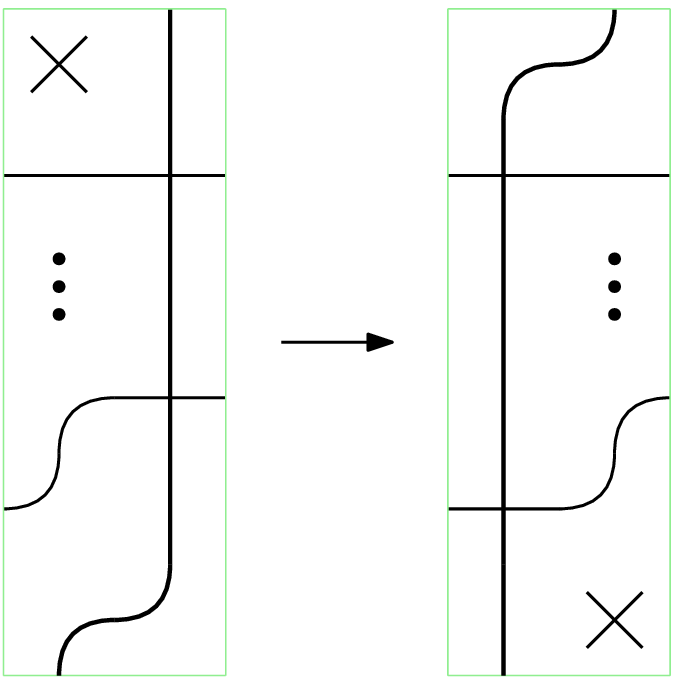}
\qquad
\includegraphics[scale=0.3]{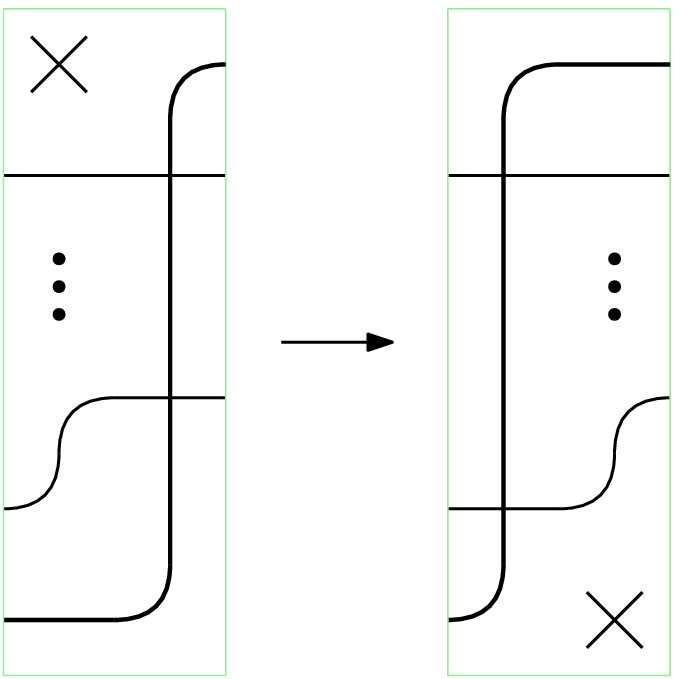}
\qquad
\includegraphics[scale=0.3]{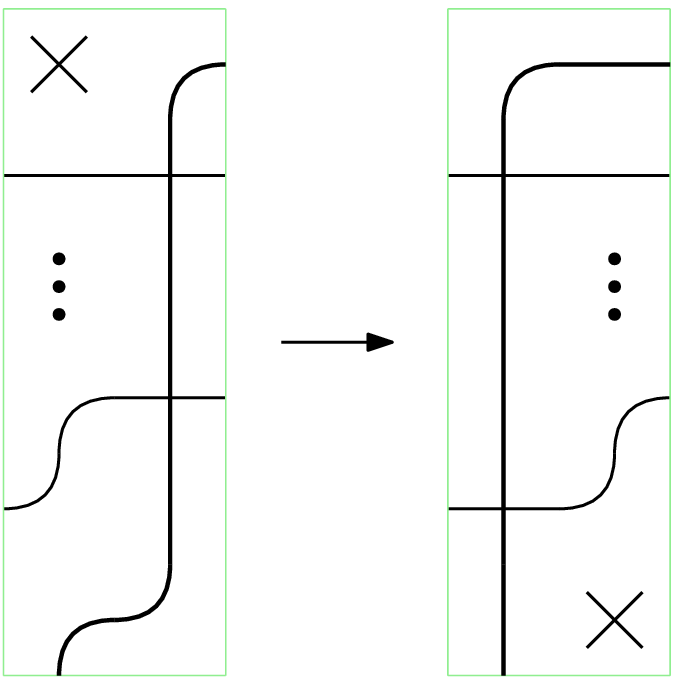}
\caption{Column moves}
\label{fig:columnmoves}
\end{figure}
\item 
If $p=y+1$, the pipes $y$ and $y+1$ must intersect at some $(x',y+1)$ for some $x'>x$. Replace  this \+-tile with a \bt-tile, undroop the \jt-turn of this tile into $(x,y)$ and adjust the pipes between row $x$ and $x'$ so that their ``kinks shift right'', in a same fashion as described in  Step (2) above. In this case, call the rectangle with NW corner $(x,y)$ and SE corner $(n,y+1)$ the \emph{column move rectangle.} These moves are shown in Figure~\ref{fig:deletemark}.
We are done after this step.

\begin{figure}[h!]
\centering
\includegraphics[scale=0.3]{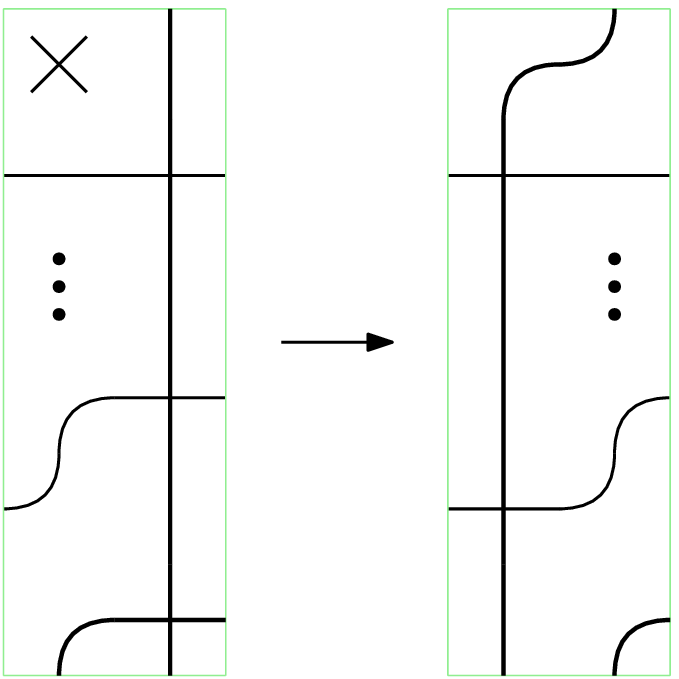}
\qquad\qquad
\includegraphics[scale=0.3]{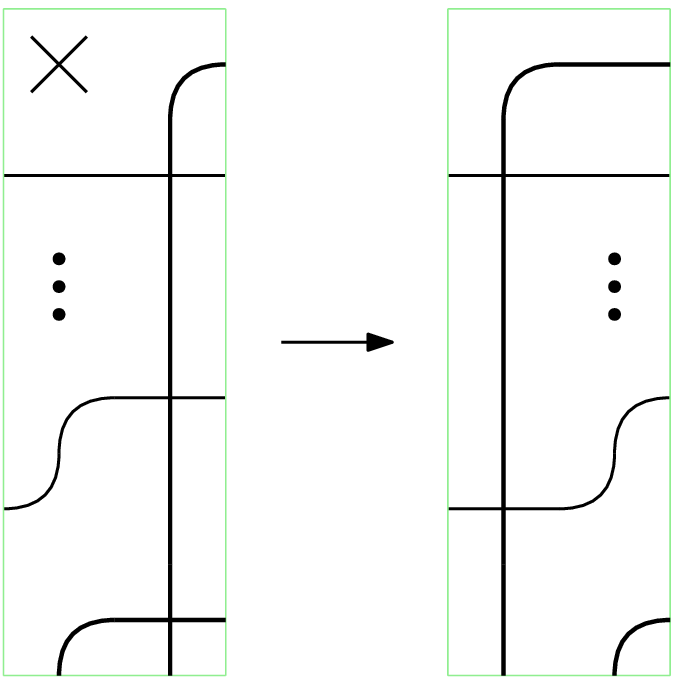}
\caption{The final step of deleting a marked \bl-tile}
\label{fig:deletemark}
\end{figure}
\end{enumerate}
Let $a$ be the column index of the left column of the last column move rectangle as in Step (3).
The final BPD is of the permutation $s_a\pi$. Denote the result by $\nabla D\in\BPD(s_a\pi)$, and write $\pop(D)=(a, r)$. Let the \textit{footprints} of $D$ be the set of coordinates that are SE corners of the column move rectangles except for the southernmost one.
\end{defin}
Note that each step above is invertible. Specifically, given $D'$ and a pair $(a,r)$, we can uniquely recover $D$, if it exists, such that $\nabla D=D'$ and $\pop(D)=(a,r)$, by inverting the above steps: start with crossing the pipe $a$ and pipe $a+1$ where the pipe $a+1$ first turn right (if pipe $a$ and $a+1$ already cross in $D'$, then we know that such $D$ does not exist) and creating a \bl-tile in some row $r'$; then keep doing backward-direction column moves and sliding the \bl-tile to the left until the \bl-tile reaches row $r$. If this \bl-tile cannot land exactly on row $r$, we also conclude that such $D$ does not exist.

\begin{ex}
Figure~\ref{fig:pop-ex1} demonstrates how Definition~\ref{def:pop} works for a certain bumpless pipe dream $D\in\BPD(\pi)$ step by step, where $\nabla D\in\BPD(s_a\pi)$ is obtained in the end, with $\pi=2153746$, $a=4$, $\pop(D)=(4,1)$.
\begin{figure}[h!]
\centering
\includegraphics[scale=0.28]{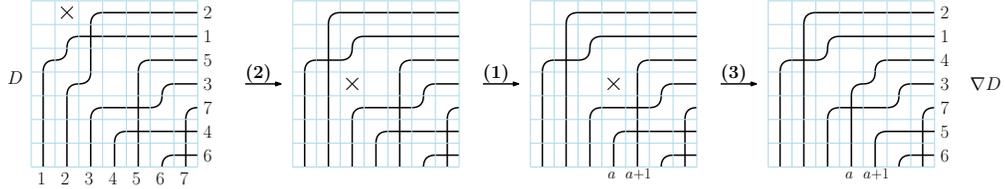}
\caption{An example of obtaining $\nabla D$ with the step number labeled on the arrows}
\label{fig:pop-ex1}
\end{figure}
\end{ex}

Repeatedly applying the procedure in Definition~\ref{def:pop}, we obtain the the following map, which is the main object of study of this paper.
\begin{defin}[The bijection]\label{def:bijection}
Given $D\in\BPD(\pi)$ with $\ell(\pi)=\ell$, let \[\varphi(D)=\big(\mathbf{a}=(a_1,\ldots,a_{\ell}),\mathbf{r}=(r_1,\ldots,r_{\ell})\big),\] where $\pop(\nabla^{i-1}D)=(a_i,r_i)$ for $i=1,\ldots,\ell$.
\end{defin}
\begin{ex}
Figure~\ref{fig:bijection-21543-ex1} and Figure~\ref{fig:bijection-21543-ex2} demonstrate two examples of the bijection $\varphi$ in Definition~\ref{def:bijection} for $\pi=21543$, whose Schubert polynomial $\mathfrak{S}_{\pi}$ is not multiplicity-free in the monomial expansion (see \cite{fink2021zero-one}). Both examples  have weight $x_1^2x_2x_3$.
\begin{figure}[h!]
\centering
\includegraphics[scale=0.3]{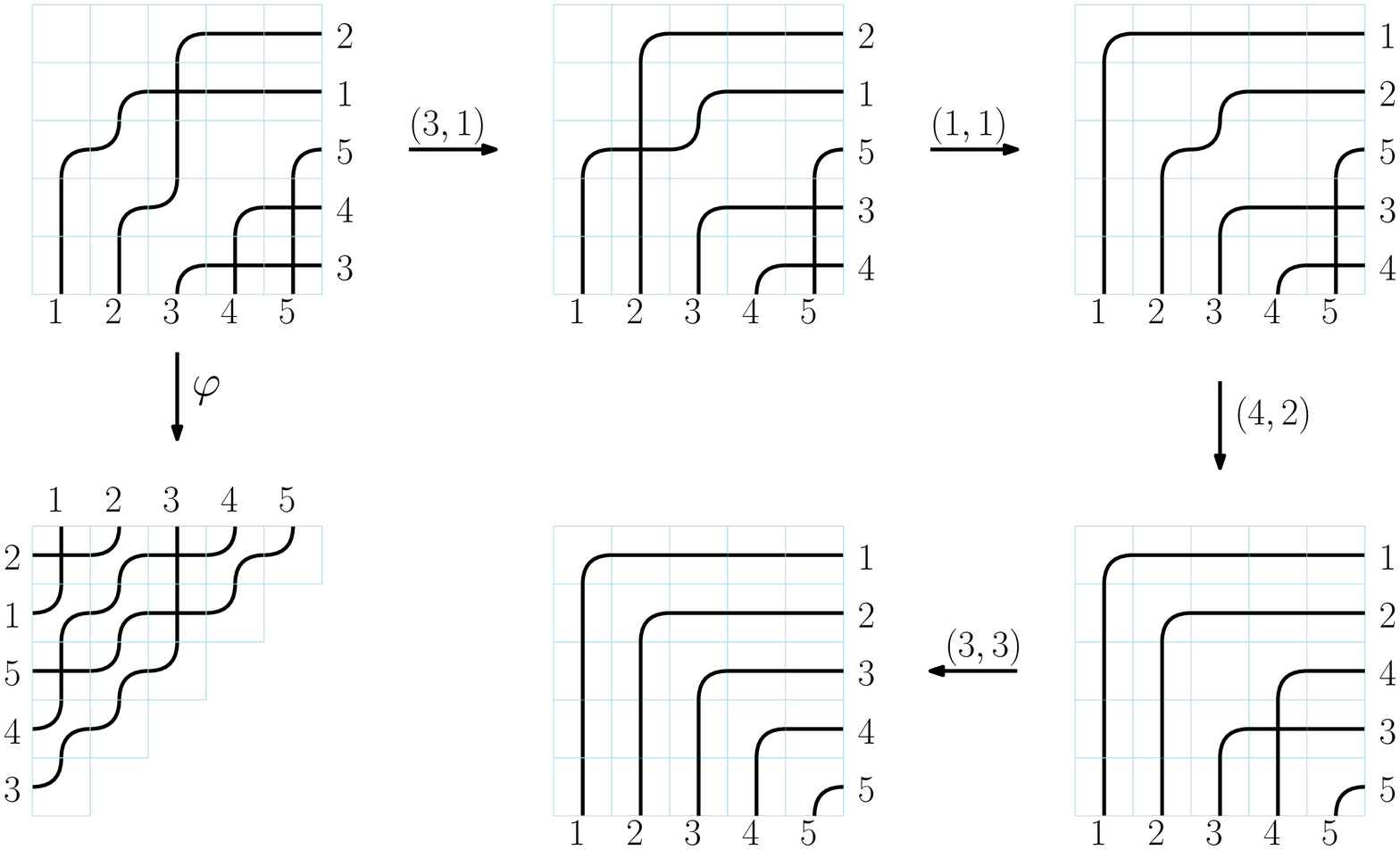}
\caption{An example for the bijection $\varphi$ with $\pi=21543$, where the values of $\pop$'s are shown on the arrows.}
\label{fig:bijection-21543-ex1}
\end{figure}
\begin{figure}[h!]
\centering
\includegraphics[scale=0.3]{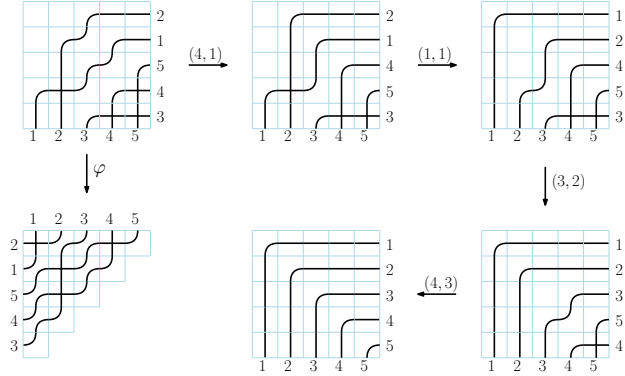}
\caption{Another example for the bijection $\varphi$ with $\pi=21543$, where the values of $\pop$'s are shown on the arrows.}
\label{fig:bijection-21543-ex2}
\end{figure}
\end{ex}

\begin{lemma}\label{lem:bijection-welldefined}
For $D\in\BPD(\pi)$, $\varphi(D)$ is a compatible sequence of $\pi$.
\end{lemma}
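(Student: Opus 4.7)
The plan is to verify the four conditions of Definition~\ref{def:compatible-sequence} for $\varphi(D) = (\mathbf{a}, \mathbf{r})$ in turn.

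Condition (1), that $(a_1, \ldots, a_\ell)$ is a reduced word of $\pi$, is immediate from Definition~\ref{def:pop}: each application of $\nabla$ multiplies the permutation by some $s_{a_i}$ on the left and drops the length by exactly one, so iterating $\ell$ times yields the reduced factorization $\pi = s_{a_1} \cdots s_{a_\ell}$. Condition (2), that the $r_j$'s are weakly increasing, holds because every column-move or Step~(3) rectangle in a single $\nabla$ step has its top edge at row $r_j$ or below; consequently all cells in rows $1, \ldots, r_j - 1$ are preserved, no new blanks are introduced there, and the first row containing a blank cannot decrease.

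For condition (3), $r_j \leq a_j$, the key is the structural claim that in any reduced BPD, every blank in the first row $r$ containing blanks lies in a column $\geq r$. To prove it I would induct on $c = 1, \ldots, r-1$, showing that pipe $c$ is forced to fill column $c$ from row $c$ downward with its J-tile at $(c,c)$. The base $c=1$ uses that $(1,1)$ is non-blank and only pipe~$1$ can reach column~$1$; the sole tile consistent with pipe~$1$'s up-and-right motion at $(1,1)$ is a J-tile, forcing $\pi(1) = 1$ and filling the rest of column~$1$. The inductive step applies the same argument to column $c$, using that pipes $1, \ldots, c-1$ are already confined to columns $1, \ldots, c-1$ by the induction hypothesis. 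Consequently $(r, c)$ is a V-tile of pipe $c$ for every $c < r$, so the rightmost blank in row $r$ (where the mark begins) lies in some column $y_0 \geq r$. Since each column move advances the mark's column by exactly one and $a_j$ is the mark's column when Step~(3) triggers, we obtain $a_j \geq y_0 \geq r_j$.

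Condition (4), requiring $r_j < r_{j+1}$ whenever $a_j < a_{j+1}$, is the main obstacle. Combined with condition (2), this is equivalent to showing that if $r_j = r_{j+1} = r$, then $a_j \geq a_{j+1}$. Let $c_1$ and $c_2$ be the columns of the rightmost blanks in row $r$ of $\nabla^{j-1}D$ and $\nabla^{j}D$ respectively. A careful inspection of Figures~\ref{fig:columnmoves} and~\ref{fig:deletemark} shows that the $j$-th pop modifies row $r$ only in columns $c_1$ and $c_1+1$ and leaves neither cell blank, so $c_2$ is the column of the second-rightmost blank in row $r$ of $\nabla^{j-1}D$, and in particular $c_2 < c_1$. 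The harder part is then to compare the two column-move trajectories: outside the column-move rectangles of the $j$-th pop, the BPDs $\nabla^{j-1}D$ and $\nabla^{j}D$ agree, so once the $(j+1)$-st mark reaches column $c_1$ (if it does), the local pipe configuration it encounters matches what the $j$-th mark saw starting from column $c_1$. A case analysis on whether the $(j+1)$-st pop triggers Step~(3) strictly before column $c_1$ or only at/after reaching $c_1$ should then yield $a_{j+1} < c_1 \leq a_j$ or $a_{j+1} \leq a_j$ respectively. The bulk of the technical work lies in verifying that the pipes crossing pipe~$p$ inside the relevant rectangles realign consistently across the two consecutive pops; this is where the main casework for (4) will live.
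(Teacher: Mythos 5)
Your handling of conditions (1)--(3) is sound. For (1) and (2) you give essentially the paper's argument. For (3) you take a longer route than necessary: the paper simply observes that if $\pop(D)=(a,r)$ then rows $1,\dots,r-1$ of $D$ contain no \bl-tiles, hence $\pi(i)=i$ for $i<r$, and since $a$ must be a left descent of $\pi$ this already forces $a\ge r$. Your column-by-column confinement argument establishes the same structural fact and is correct (modulo a naming slip: the corner tile at $(c,c)$ connecting the bottom and right edges is an \rt-tile, not a \jt-tile, in this paper's convention).

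The genuine gap is in condition (4), which is the only nontrivial part of the lemma. Your reduction is right: if $r_j=r_{j+1}=r$, the $(j+1)$-st mark starts at a column $c_2<c_1$ strictly west of where the $j$-th mark started, and one must rule out $a_j<a_{j+1}$. But the step you defer --- that ``once the $(j+1)$-st mark reaches column $c_1$, the local pipe configuration it encounters matches what the $j$-th mark saw'' --- is false as literally stated: the $j$-th pop has already modified every one of its column-move rectangles, all of which sit in columns $\ge c_1$ and rows $\ge r$, which is precisely the region the $(j+1)$-st mark would enter. So the two trajectories are not reading the same tiles, and the promised case analysis (including the realignment of the pipes crossing $p$ across two consecutive pops) is exactly the hard content you have not supplied. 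What is needed is a monotonicity statement for successive pops: if $a_j<a_{j+1}$, then the footprints of $\nabla^{j-1}D$ lie strictly S/W/SW of the footprints of $\nabla^{j}D$. The paper imports this as Lemma~3.2 of \cite{huang2021schubert} and derives an immediate contradiction with $c_2<c_1$; without that lemma or a complete substitute for it, your proof of (4) is a plan rather than a proof.
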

\begin{proof}
We check the conditions of compatible sequences for $\varphi(D)$ as in Definition~\ref{def:compatible-sequence}. Condition (1) follows from $\nabla D\in\BPD(s_a\pi)$ if $\pop(D)=(a,r)$. Condition (2) is evident from construction, as we are removing \bl-tiles from top to bottom. Note that if $\pop(D)=(a,r)$, with $D\in\BPD(\pi)$, there are no \bl-tiles in the first $r-1$ rows of $D$, meaning that $\pi(i)=i$ for $i=1,\ldots,r-1$, and consequently $a\geq r$. Thus, condition (3) follows. For condition (4), we assume the opposite that $r_{j}\geq r_{j+1}$, i.e. $r_j=r_{j+1}$ and $a_j<a_{j+1}$. Since $a_j<a_{j+1}$, by Lemma 3.2 of \cite{huang2021schubert}, the footprints of $\nabla^{j-1}D$ are strictly to the S/W/SW direction of the footprints of $\nabla^j D$. However, the starting point of the footprints of $\nabla^{j-1}D$ is directly east of the footprints of $\nabla^{j}D$ as we start in the same row $r_j=r_{j+1}$. Contradiction. 
\end{proof}
\begin{theorem}\label{thm:bijection}
The map $\varphi$ in Definition~\ref{def:bijection} is a weight-preserving bijection between $\BPD(\pi)$ and $\PD(\pi)$, i.e. compatible sequences of $\pi$. 
\end{theorem}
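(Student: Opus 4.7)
The plan is to reduce the theorem to two claims: (a) $\varphi$ is weight-preserving, and (b) $\varphi$ is injective. Bijectivity then follows formally: by Theorems~\ref{thm:Schubert-PD} and~\ref{thm:Schubert_BPD}, $\sum_{D\in\BPD(\pi)}\wt(D)=\mathfrak{S}_\pi=\sum_{C\in\PD(\pi)}\wt(C)$, so a weight-preserving injection $\BPD(\pi)\hookrightarrow\PD(\pi)$ into a set with equal weighted sum must be surjective, since every monomial weight is strictly positive.

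For (a), the key reduction is the one-step identity
\[\wt(D)\;=\;x_r\cdot\wt(\nabla D)\quad\text{whenever } \pop(D)=(a,r),\]
which iterates, using $\varphi(D)=((a_i),(r_i))$, to $\wt(D)=\prod_i x_{r_i}=\wt(\varphi(D))$. I would prove this identity via a structural observation about Definition~\ref{def:pop}: inside each column move rectangle $U$ processed by $\nabla$, the only \bl-tile affected is the marked one at the NW corner; the other tile modifications involve only \rt-, \jt-, \+- and (in Step (3)) \bt-tiles. Thus, as the marked \bl-tile travels from row $r$ through successive rectangles and is finally absorbed in Step (3), every other \bl-tile of $D$ remains in place. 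The net effect on $\wt$ is precisely the deletion of one factor $x_r$.

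For (b), I would use the invertibility of $\nabla$ already noted after Definition~\ref{def:pop}: from $\nabla D$ together with $\pop(D)=(a,r)$, the preimage $D$ is uniquely reconstructible by running the reverse procedure (cross pipes $a$ and $a+1$ at the first right-turn of pipe $a+1$, then slide the resulting blank leftward via reverse column moves until it lands in row $r$). Since $\nabla^{\ell(\pi)}D$ is the unique BPD of the identity permutation, a downward induction on $i$ using the data $\pop(\nabla^{i-1}D)=(a_i,r_i)$ uniquely recovers $D$, giving the injectivity of $\varphi$.

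The main obstacle is the structural claim in (a). The $\nabla$ procedure is intricate—iterated column moves with droops of crossing pipes plus a special terminal step creating and undrooping a \bt-tile—and verifying that no \bl-tile besides the marked one is ever created, relocated, or destroyed requires a careful case analysis (checking Step (2)(a) does not produce spurious \bl-tiles, that the newly created \bl-tile in Step (2)(b) only inherits the marked status, and that Step (3) removes the mark without leaving a blank behind). Once this invariant is established, the weight-preservation identity and the correctness of the reverse process both follow cleanly, and the formal bijectivity argument above closes out the theorem.
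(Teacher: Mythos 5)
Your overall architecture matches the paper's: weight-preservation plus injectivity of $\varphi$ (via invertibility of each $\nabla$ step), followed by the counting argument that a weight-preserving injection between sets with the same weighted enumeration $\mathfrak{S}_\pi$ must be a bijection. Your one-step identity $\wt(D)=x_r\cdot\wt(\nabla D)$ and the structural claim that the column moves only relocate the marked \bl-tile (so that the net effect of $\nabla$ on $\blank(D)$ is the deletion of the single blank in row $r$) is exactly what the paper treats as ``immediate from the construction,'' and your injectivity argument is the same reverse-procedure argument the paper sketches after Definition~\ref{def:pop}.

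However, there is a genuine gap: you never verify that $\varphi(D)$ actually lies in $\PD(\pi)$, i.e.\ that the tuple $\big((a_i),(r_i)\big)$ read off from the iterated $\pop$'s is a compatible sequence of $\pi$ in the sense of Definition~\ref{def:compatible-sequence}. Your surjectivity argument explicitly invokes ``a weight-preserving injection $\BPD(\pi)\hookrightarrow\PD(\pi)$,'' but without well-definedness the codomain claim is unestablished and the counting argument cannot be run. Conditions (1)--(3) are relatively routine (reducedness from $\nabla D\in\BPD(s_a\pi)$; weak increase of $\mathbf{r}$ since blanks are removed top to bottom; $r_j\le a_j$ since the absence of blanks above row $r$ forces $\pi(i)=i$ for $i<r$), but condition (4) --- that $r_j<r_{j+1}$ whenever $a_j<a_{j+1}$ --- is not automatic. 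The paper isolates this as Lemma~\ref{lem:bijection-welldefined}, and its proof of condition (4) requires a nontrivial monotonicity statement about the footprints of successive $\nabla$ applications (Lemma 3.2 of \cite{huang2021schubert}): if $r_j=r_{j+1}$ and $a_j<a_{j+1}$, the footprints of $\nabla^{j-1}D$ would have to lie strictly S/W/SW of those of $\nabla^{j}D$ while starting directly east of them, a contradiction. You should add this verification before running your counting argument.
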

\begin{proof}
By Lemma~\ref{lem:bijection-welldefined}, we have a well-defined map $\varphi$ from $\BPD(\pi)$ to $\PD(\pi)$. Immediate from the construction (Definition~\ref{def:bijection}), $\varphi$ is weight-preserving. Since each step $\nabla$ is invertible if a preimage exists, the map $\varphi$ is injective. As both pipe dreams and bumpless pipe dreams enumerate the Schubert polynomial $\mathfrak{S}_\pi$, $\varphi$ is a bijection. 
\end{proof}

\section{Preserving Monk's rule}\label{sec:Monk}
In this section, we demonstrate why the map $\varphi$ in Definition~\ref{def:bijection} is ``canonical", by showing that it preserves Monk's rule. 
\begin{theorem}[Monk's rule]\label{thm:monk}
Let $\pi$ be a permutation and $\alpha$ be a positive integer,
\begin{equation}\label{eq:Monk}
x_{\alpha}\S_\pi+\sum_{\substack{s<\alpha\\\pi t_{s,\alpha}\gtrdot\pi}}\S_{\pi t_{s,\alpha}}=\sum_{\substack{l>\alpha\\\pi t_{\alpha,l}\gtrdot\pi}}\S_{\pi t_{\alpha,l}}.
\end{equation}
\end{theorem}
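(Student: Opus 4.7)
The plan is to reduce Monk's rule to the cleaner Chevalley-type identity
\[
\S_{s_\alpha}\cdot\S_\pi \;=\; \sum_{\substack{a\leq\alpha<b\\ \pi t_{a,b}\gtrdot\pi}} \S_{\pi t_{a,b}},
\]
and then derive Monk's formula by subtraction. A short induction on the divided-difference recurrence gives $\S_{s_\alpha}=x_1+\cdots+x_\alpha$, so $x_\alpha=\S_{s_\alpha}-\S_{s_{\alpha-1}}$. Subtracting the Chevalley identity at $\alpha-1$ from the one at $\alpha$, every term with $a\leq\alpha-1$ and $b>\alpha$ cancels; what survives are precisely the covers with $a=\alpha$ (matching the right-hand side of \eqref{eq:Monk}) minus the covers with $b=\alpha$ (matching the sum $\sum\S_{\pi t_{s,\alpha}}$ on the left), and rearranging yields \eqref{eq:Monk}.

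For the Chevalley-type identity, I would induct downward on $\ell(\pi)$ using right divided differences. Fix a large ambient $S_m$ (by stability this loses nothing) and take as base case the longest element $\pi=w_0^{(m)}$, where both sides can be computed by direct enumeration of covers. For the inductive step, choose $i$ with $\pi s_i\gtrdot \pi$, so $\S_\pi=\partial_i\S_{\pi s_i}$. Applying $\partial_i$ to the inductive hypothesis for $\pi s_i$ and invoking the Leibniz rule
\[
\partial_i(fg)=(\partial_i f)\cdot g + (s_i f)\cdot(\partial_i g),
\]
together with the explicit formulas $\partial_i\S_{s_\alpha}\in\{0,1\}$ and $s_i\S_{s_\alpha}=\S_{s_\alpha}$ for $i\neq\alpha$, reduces everything to an identity among Schubert polynomials of covers of $\pi$.

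The main obstacle will be the combinatorial bookkeeping: one must show that right multiplication by $s_i$ yields a bijection between covers of $\pi s_i$ in the Chevalley index set $\{a\leq\alpha<b\}$ and covers of $\pi$ in the same index set, with boundary contributions arising exactly when $s_i$ swaps or collapses a cover pair. This reduces to the standard Bruhat exchange lemma for right multiplication by simple reflections, but requires a careful case split on whether $\{i,i{+}1\}$ meets $\{a,b\}$. An alternative, more geometric route is to invoke the Chevalley–Pieri formula from the intersection theory of $\mathrm{Fl}_n$, where $\S_{s_\alpha}$ represents the first Chern class of a natural line bundle and Monk's rule becomes the divisor axiom applied to Schubert classes; this avoids the polynomial bookkeeping entirely at the cost of importing the Borel presentation of $H^*(\mathrm{Fl}_n)$.
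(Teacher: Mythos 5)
Your overall strategy is sound, and it is worth noting at the outset that the paper does not actually prove Theorem~\ref{thm:monk}: it states it as a classical result, remarks that this version ``is derived from the Monk's rule that expands the product of a linear Schubert polynomial with a general Schubert polynomial,'' and points to bijective proofs on pipe dreams \cite{billey2019bijective} and on bumpless pipe dreams \cite{huang2020bijective}. Your reduction is precisely the derivation the paper alludes to in that one sentence: since $\S_{s_\alpha}=x_1+\cdots+x_\alpha$, subtracting the Chevalley--Monk identity for $s_{\alpha-1}$ from the one for $s_\alpha$ cancels all covers $\pi t_{a,b}\gtrdot\pi$ with $a<\alpha<b$ and leaves $x_\alpha\S_\pi=\sum_{l>\alpha}\S_{\pi t_{\alpha,l}}-\sum_{s<\alpha}\S_{\pi t_{s,\alpha}}$, which is Equation~\eqref{eq:Monk}; this part of your argument is complete and correct. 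The remaining content is the Chevalley identity itself, for which your descending induction via $\partial_i$ and the Leibniz rule is the standard algebraic proof (Macdonald's notes carry it out in full), whereas the cited references prove it combinatorially by explicit weight-preserving insertion algorithms on (bumpless) pipe dreams --- the very maps $x_\alpha{\rightsquigarrow}$ and $m_{s,\beta}$ that this paper then shows are intertwined by $\varphi$. Two points in your sketch need care. First, in the inductive step you cannot always choose $i\neq\alpha$: a permutation may have $\alpha$ as its only right ascent inside the ambient $S_m$ (e.g.\ $\pi=132$ in $S_3$ with $\alpha=1$), and when $i=\alpha$ one has $s_\alpha\S_{s_\alpha}=\S_{s_\alpha}-x_\alpha+x_{\alpha+1}$ and $\partial_\alpha\S_{s_\alpha}=1$, so extra terms enter the Leibniz expansion and must be absorbed into the cover bookkeeping; this is exactly where the ``careful case split'' you anticipate becomes unavoidable rather than optional. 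Second, the base case $\pi=w_0^{(m)}$ is fine but its covers live in $S_{m+1}$ (they are $w_0^{(m)}t_{a,m+1}$ for $a\le\alpha$, all dominant, with monomial Schubert polynomials $x_a\cdot x^{\delta}$), so the direct enumeration you propose does close up --- just be explicit that the identity is being verified in $S_\infty$, not within a single $S_m$.
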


This version of Monk's rule is derived from the Monk's rule that expands the product of a linear Schubert polynomial with a general Schubert polynomial. In the cases when the summation on the right-hand of Equation~\eqref{eq:Monk} is a single Schubert polynomial, the formula specializes to the \emph{transition formula} of Schubert polynomials, which is an important inductive formula for Schubert polynomials. In the case when the summation on the left-hand side is empty, the formula specializes to the \emph{cotransition formula} \cite{knutson2019schubert}. In \cite{billey2019bijective}, a bijective proof of the transition formula is given using pipe dreams, which can be easily extended to a proof of Monk's rule in its general form. The second author gives a bijective proof of Monk's rule with bumpless pipe dreams in \cite{huang2020bijective}, which also generalizes to the equivariant version. It is also remarked in \cite{huang2020bijective} that using either the transition or the cotransition formula, one can construct  bijections inductively of pipe dreams and bumpless pipe dreams. As a corollary of our result, these  inductive bijections agree. We review both weight-preserving bijections for Equation~\eqref{eq:Monk} on pipe dreams and bumpless pipe dreams in Section~\ref{sub:monk-pd} and Section~\ref{sub:monk-bpd}.

\subsection{Monk's rule on pipe dreams}\label{sub:monk-pd}
We present the following maps
\[
\begin{cases}
x_\alpha{\rightsquigarrow}:&\PD(\pi)\rightarrow \bigcup_{\substack{l>\alpha\\\pi t_{\alpha,l}\gtrdot\pi}}\PD(\pi t_{\alpha,l})\\
m_{s,\beta}:&\PD(\pi t_{s,\beta})\rightarrow \bigcup_{\substack{l>\beta\\\pi t_{\beta,l}\gtrdot\pi}}\PD(\pi t_{\beta,l}),\ s<\beta,\pi t_{s,\beta}\gtrdot\pi
\end{cases}
\]
such that $x_{\alpha}{\rightsquigarrow}$ and $m_{s,\beta}$'s for $\beta=\alpha$ together form a weight-preserving bijection from the left-hand side to the right-hand side of Equation~\eqref{eq:Monk}. To be precise on the weight, $x_{\alpha}{\rightsquigarrow}$ multiplies the weight by $x_{\alpha}$ while $m_{s,\beta}$ preserves the weight. Readers are referred to \cite{billey2019bijective}  for further details, and how these maps can be inverted. 

\begin{defin}[$x_{\alpha}{\rightsquigarrow},m_{s,\beta}$ on  pipe dreams]\label{def:monk-pd}
Given $D\in\PD(\pi)$, the following procedure produces $x_{\alpha}{\rightsquigarrow} D$.
\begin{enumerate}
\item 
Find the leftmost \elbow-tile on row $\alpha$ of $D$ and replace it by \+. 
\item If the newly added \+-tile creates a double crossing with another \+-tile at coordinate $(i,j)$, replace the \+-tile at $(i,j)$ with \elbow, find the smallest $j'>j$ such that $(i,j')$ is a \elbow-tile, and replace it by \+. Repeat this step.
\end{enumerate}
Analogously, given $\pi$, $D\in\PD(\pi t_{s,\beta})$ such that $\pi t_{s,\beta}\gtrdot\pi$, the following procedure produces $m_{s,\beta}(D)\in\PD(\pi t_{\beta,l})$ for some $l>\beta$ where $\pi t_{\beta,l}\gtrdot \pi$. 
\begin{enumerate}
\item Locate the \+-tile between pipe $\pi^{-1}(s)$ and $\pi^{-1}(\beta)$ in $D$ at coordinate $(i,j)$. Make it into a \elbow-tile. Find the smallest $j'>j$ such that $(i,j')$ is a \elbow-tile, and replace it by \+.
\item Exactly the same as step (2) above.
\end{enumerate}
\end{defin}
See Figure~\ref{fig:monk-pd-ex1} for an example.

\subsection{Monk's rule on bumpless pipe dreams}\label{sub:monk-bpd}
We present the analogous maps
\[
\begin{cases}
x_\alpha{\rightsquigarrow}:&\BPD(\pi)\rightarrow \bigcup_{\substack{l>\alpha\\\pi t_{\alpha,l}\gtrdot\pi}}\BPD(\pi t_{\alpha,l})\\
m_{s,\beta}:&\BPD(\pi t_{s,\beta})\rightarrow \bigcup_{\substack{l>\beta\\\pi t_{\beta,l}\gtrdot\pi}}\BPD(\pi t_{\beta,l}),\ s<\beta,\pi t_{s,\beta}\gtrdot\pi
\end{cases}
\]
for bumpless pipe dreams. For details including illustrated examples, see \cite{huang2020bijective}. First we recall from \cite{huang2020bijective} that an \textit{almost bumpless pipe dream} of $\pi$ is defined by relaxing the condition of bumpless pipe dreams by allowing a single \bt-tile in the grid.

\begin{defin}[Basic Monk moves on (almost) bumpless pipe dreams]\label{def:basic-monk-moves}
We define two \emph{basic Monk moves}, $\mindroop$ and $\cbswap$, on (almost) bumpless pipe dreams. See Figure \ref{fig:basic-monk-moves}.
\vskip 0.5em

\noindent $\mindroop$: Let $(a,b)$ be the position of an \rt-turn of a pipe $p$. Note that the tile at $(a,b)$ could be a \rt-tile or \bt-tile. Let $x>0$ be the smallest number where $(a+x,b)$ is not a \+-tile, and $y>0$ be the smallest number where $(a,b+y)$ is not a \+-tile. A $\mindroop$ at $(a,b)$ droops $p$ into $(a+x,b+y)$.
\vskip 0.5em

\noindent $\cbswap$: Suppose $(a,b)$ is a $\bt$-tile of pipes $p$ and $q$, and $p$ and $q$ also have a crossing at $(a',b')$. Then a $\cbswap$ move at $(a,b)$ swaps the two tiles at $(a,b)$ and $(a',b')$.

Furthermore, we consider the initial move that replaces a \+-tile with a \bt-tile when computing $m_{s,\beta}$, and the final move that replaces a \bt-tile with a \+-tile also as special basic Monk moves.

\begin{figure}[h!]
    \centering
    \includegraphics[scale=0.7]{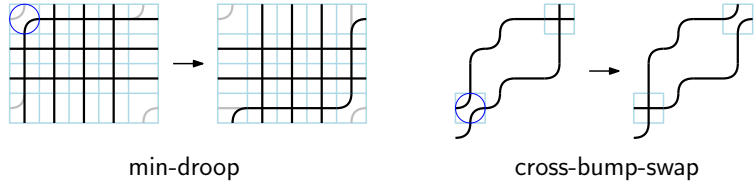}
    \caption{Basic monk moves at the circled coordinates}
    \label{fig:basic-monk-moves}
\end{figure}
\end{defin}

By the stability property of Schubert polynomials, we may embed $\pi$ into a larger symmetric group as necessary so that a $\mindroop$ is always possible.

\begin{defin}[$x_{\alpha}{\rightsquigarrow},m_{s,\beta}$ on bumpless pipe dreams]\label{def:monk-bpd}
Given $D\in\BPD(\pi)$, the following procedure produces $x_\alpha \rightsquigarrow D$.
\begin{enumerate}
    \item Let $(\alpha, j)$ be the easternmost \rt-tile in row $\alpha$, initialize $(\mathsf{x},\mathsf{y}):=(\alpha,j)$.
    \item Perform a $\mindroop$ at $(\mathsf{x},\mathsf{y})$. Let $(z,w)$ be the SE corner of this $\mindroop$.
    \begin{enumerate}
        \item If after the $\mindroop$, $(z,w)$ is a \jt-tile, update $(\mathsf{x},\mathsf{y})$ as the position of \rt-tile of pipe $\pi(\alpha)$ in row $z$, and repeat this step.
        
        \item If after this $\mindroop$, $(z,w)$ is a \bt-tile, let $q$ be the pipe of the \rt-turn in this tile. 
        \begin{enumerate}
            \item If $\pi(\alpha)$ and $q$ have already intersected at $(z',w')$, perform a $\cbswap$ move at $(z,w)$. Update $(\mathsf{x},\mathsf{y}):=(z',w')$, and repeat Step (2). 
            \item If $\pi(\alpha)$ and $q$ have not intersected before, replace the \bt-tile with a \+-tile and stop.
        \end{enumerate}
    \end{enumerate}
\end{enumerate}
To define $m_{s,\beta}(D)$, replace Step (1) with
\begin{enumerate}
    \item[(1')] Initialize $(\mathsf{x},\mathsf{y})$ to be the position of the \+-tile of $\pi(s)$ and $\pi(\beta)$. Replace this tile with a \bt-tile.
\end{enumerate}

\end{defin}

\subsection{The main theorem and the proof}
The following is the main theorem of the paper.
\begin{theorem}\label{thm:preserve-monk}
For any $\pi\in S_{\infty}$, the bijection $\varphi:\BPD(\pi)\rightarrow\PD(\pi)$ preserves Monk's rule, i.e. $\varphi$ intertwines with $x_{\alpha}{\rightsquigarrow}$, $m_{s,\beta}$'s. Specifically, the following diagrams commute:
\begin{center}
\begin{tikzcd}
\BPD(\pi)\arrow[r,"x_{\alpha}{\rightsquigarrow}"]\arrow[d,"\varphi"] & \bigcup_{\substack{l>\alpha\\\pi t_{\alpha,l}\gtrdot\pi}}\BPD(\pi t_{\alpha,l})\arrow[d,"\varphi"] \\
\PD(\pi)\arrow[r,"x_{\alpha}{\rightsquigarrow}"] & \bigcup_{\substack{l>\alpha\\\pi t_{\alpha,l}\gtrdot\pi}}\PD(\pi t_{\alpha,l})
\end{tikzcd},
\begin{tikzcd}
\BPD(\pi t_{s,\beta})\arrow[r,"m_{s,\beta}"]\arrow[d,"\varphi"] & \bigcup_{\substack{l>\beta\\\pi t_{\beta,l}\gtrdot\pi}}\BPD(\pi t_{\beta,l})\arrow[d,"\varphi"] \\
\PD(\pi t_{s,\beta})\arrow[r,"m_{s,\beta}"] & \bigcup_{\substack{l>\beta\\\pi t_{\beta,l}\gtrdot\pi}}\PD(\pi t_{\beta,l})
\end{tikzcd},
\end{center}
where $\alpha\in\Z_{>0}$ is arbitrary, and $\pi t_{s,\beta}\gtrdot \pi$ with $s<\beta$.
\end{theorem}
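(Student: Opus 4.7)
The plan is to prove the theorem by induction on $\ell(\pi)$, reducing the full diagram commutativity to a single-step commutation between the pop $\nabla$ (the elementary step of $\varphi$) and the Monk moves. Since $\varphi$ is the iterated application of $\nabla$, it suffices to establish the following single-step identity: for every $D \in \BPD(\pi)$ and every Monk move $\mu$ (either $x_\alpha {\rightsquigarrow}$ or $m_{s,\beta}$) on BPDs, there exists an induced Monk move $\mu'$ of one lower length such that $\nabla (\mu D) = \mu'(\nabla D)$ and such that the $\pop$ extracted in computing $\nabla(\mu D)$ matches the first entry of the compatible sequence produced by applying the corresponding pipe dream Monk move to $\varphi(D)$. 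Iterating this identity along the entire chain of pops then yields both commutative diagrams of the theorem.

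A first reduction is that after their initial tile replacements, the procedures $x_\alpha {\rightsquigarrow}$ and $m_{s,\beta}$ from Definition~\ref{def:monk-bpd} execute an identical sequence of $\mindroop$ and $\cbswap$ moves, and similarly on the pipe dream side in Definition~\ref{def:monk-pd} their cascading-shift behaviors coincide once the initial step is performed. Thus it suffices to establish the core single-step commutation for $x_\alpha {\rightsquigarrow}$ in full generality and to treat the initial bookkeeping of $m_{s,\beta}$ as a special seed at a prescribed coordinate.

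The core of the proof is a case analysis of how the Monk trajectory (a sequence of mindroops and cross-bump-swaps traveling south-east on the BPD) interacts with the first column-move rectangle of $\nabla$, which lies in the topmost row $r$ containing a \bl-tile. Letting $\pop(D) = (a, r)$, the cases separate according to the relative positions of $\alpha$ and $r$, the column $a-r+1$ of the first \bl-tile, and the location where the $\mindroop$/$\cbswap$ trajectory enters, avoids, or terminates relative to the column-move rectangles produced by $\nabla$. In the simplest cases, the two operations are supported on disjoint regions and commute strictly; on the PD side the added or shifted compatible-sequence entries slot in before or after $(a_1, r_1)$ in exactly the way prescribed by Definition~\ref{def:monk-pd}, with the induced $\mu'$ readable off directly. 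In the overlapping cases one has to reroute the pipes of both the Monk trajectory and the column moves simultaneously, verifying that the resulting reconfiguration agrees on the BPD and PD sides.

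The principal obstacle, and the reason Section~\ref{sec:Monk} is broken into many subsections, is this overlapping case analysis: a $\cbswap$ resolving a double crossing introduced by a $\mindroop$ can land inside a column-move rectangle, altering or being altered by the pipe reroutings prescribed by $\nabla$, and the exact matching with the elbow-shifting cascade on pipe dreams must be verified configuration by configuration. To keep the argument tractable I would introduce intermediate data tracking the ``traveling pipe'' of each Monk step together with the footprints of Definition~\ref{def:pop}, and repeatedly invoke the footprint-monotonicity lemma of~\cite{huang2021schubert} (already used in Lemma~\ref{lem:bijection-welldefined}) to constrain how the two procedures can interact. Once each overlapping configuration is verified, the single-step identity holds, the induction closes, and the theorem follows.
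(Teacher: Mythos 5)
Your overall architecture --- induction on $\ell(\pi)$, reduced to a commutation statement between the elementary step $\nabla$ and the Monk moves, proved by a case analysis of how the Monk trajectory meets the column move rectangles --- is the same as the paper's. However, the central identity you propose is false as stated, and repairing it is precisely the content of the paper's key lemmas. You assert that for each Monk move $\mu$ there is \emph{an} induced Monk move $\mu'$ of one lower length with $\nabla(\mu D)=\mu'(\nabla D)$. In the critical case (when, in the paper's notation, both $i$ and $i+1$ lie in $\DES_L(\rho)$ for $\rho=\perm(m_{s,\beta}(\nabla D))$), one instead has
\[
\nabla\bigl(m_{s,\beta}(D)\bigr)=m_{\rho^{-1}(i+2),\rho^{-1}(i+1)}\bigl(m_{s,\beta}(\nabla D)\bigr),
\]
a \emph{composition of two} Monk moves applied to $\nabla D$; and in the degenerate case $\alpha<r$ one has $\nabla(x_\alpha{\rightsquigarrow}D)=D$ with no Monk move at all. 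Consequently the induction does not close by ``iterating the single-step identity along the chain of pops'': in the critical case the induction hypothesis must be invoked twice, once for $m_{s,\beta}$ at length $\ell$ and once more for the extra move $m_{\rho^{-1}(i+2),\rho^{-1}(i+1)}$ at the smaller length $\ell(\rho)$, and one must check that the trichotomy of cases is governed by data ($i,i+1\in\DES_L(\rho)$ or not) that depends only on the permutation, so that the same extra move is induced on the PD side and the BPD side.

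Second, your plan to verify ``the exact matching with the elbow-shifting cascade on pipe dreams \dots configuration by configuration'' suggests a direct, tile-level comparison between the BPD Monk trajectory and the PD cascade. There is no such local correspondence, and the paper's design deliberately avoids one: it proves two \emph{independent} lemmas, one purely about pipe dreams and one purely about bumpless pipe dreams, each computing $\pop(\mu D)$ and $\nabla(\mu D)$ solely in terms of $\rho$ and its descents; the theorem then follows because the two answers have identical form. Your proposal would need to be reorganized along these lines, and on the BPD side supplemented with substantial machinery you do not mention --- almost bumpless pipe dreams, an extension of $\nabla$ to them, and the notion of corresponding tiles --- which is what makes the idea of running the two Monk computations on $D$ and $\nabla D$ in parallel subsequences precise. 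Your instinct to use the footprint-monotonicity lemma is sound (the paper uses it on the PD side, via Lemma~\ref{lem:monk-increase-one}, to locate the first cross-tile after a Monk move), but it does not by itself resolve the overlapping configurations on the BPD side.
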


Despite how simple Theorem~\ref{thm:preserve-monk} is stated, the proof turns out to rely on  technical lemmas on pipe dreams (Lemma~\ref{lem:PD-induction-main}) and bumpless pipe dreams (Lemma~\ref{lem:BPD-induction-main}), which  discuss how much the operator $\nabla$ commutes with $x_{\alpha}{\rightsquigarrow}$'s and $ m_{s,\beta}$'s respectively on pipe dreams and bumpless pipe dreams. We finish the main proof here assuming that both lemmas are already taken care of.
\begin{proof}[Proof of Theorem~\ref{thm:preserve-monk}]
This theorem is a direct consequence of the induction principle and the fact that $\nabla$ and $x_{\alpha}{\rightsquigarrow}, m_{s,\beta}$ interact in the exact same way on pipe dreams (Lemma~\ref{lem:PD-induction-main}) and bumpless pipe dreams (Lemma~\ref{lem:BPD-induction-main}).

To be precise, we proceed by induction on $\ell(\pi)$. The base case is $\ell(\pi)=0$, i.e. $\pi=\mathrm{id}$. Take $B\in\BPD(\pi)$, which has no crossings. Its corresponding pipe dream $D:=\varphi(B)$ also has no crossings. By the definition of $x_{\alpha}{\rightsquigarrow}$, $x_{\alpha}{\rightsquigarrow} B$ has a crossing between pipe $\alpha$ and $\alpha+1$ at coordinate $(\alpha+1,\alpha+1)$ and the insertion creates a \bl-tile on row $\alpha$, while $x_{\alpha}{\rightsquigarrow} D$ has a single crossing at coordinate $(\alpha,1)$. By Definition~\ref{def:pop}, $\pop(x_\alpha{\rightsquigarrow} B)=(\alpha,\alpha)$ which gives us $\varphi(x_\alpha{\rightsquigarrow} B)=x_{\alpha}{\rightsquigarrow} D$. For the second commutative diagram, in order for $\pi t_{s,\beta}\gtrdot\pi$, we must have $s=\beta-1$. Any $B\in\BPD(\pi t_{s,\beta})$ must have a crossing between pipe $\beta-1$ and $\beta$ at coordinate $(\beta,\beta)$ and a \bl-tile at coordinate $(k,k)$ for some $k<\beta$. The operation $m_{s,\beta}$ uncrosses the \+-tile between pipe $\beta-1$ and $\beta$ and inserts a \+-tile between pipe $\beta$ and $\beta+1$ at coordinate $(\beta+1,\beta+1)$, keeping the position of the \bl-tile. Thus, we see that $\pop(B)=(\beta-1,k)$ and $\pop(m_{s,\beta}(B))=(\beta,k)$, meaning that $\varphi(B)$ has a single \+-tile at coordinate $(k,\beta-k)$ and $\varphi(m_{s,\beta}(B))$ has a single \+-tile at coordinate $(k,\beta-k+1)$ so evidently $m_{s,\beta}(\varphi(B))=\varphi(m_{s,\beta}(B))$ as desired. See Figure~\ref{fig:preserve-monk-base-case}.
\begin{figure}[h!]
\centering
\includegraphics[scale=0.3]{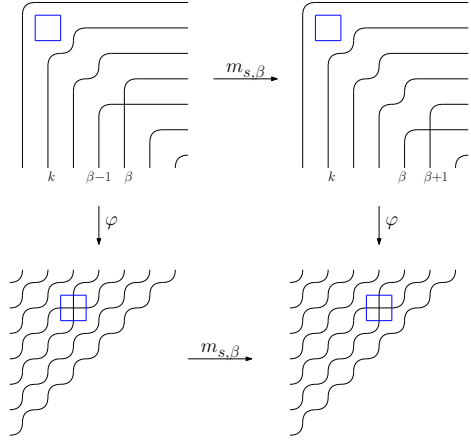}
\caption{The base case for the second commutative diagram of Theorem~\ref{thm:preserve-monk}}
\label{fig:preserve-monk-base-case}
\end{figure}

Now assume that we have established the commutative property of the above diagrams for permutations $\pi'$ with $\ell(\pi')<\ell$. Now fix $\pi$ with $\ell(\pi)=\ell$ so that $\ell(\pi t_{s,\beta})=\ell+1$. We argue about the second commutative diagram first. Take $B\in\BPD(\pi t_{s,\beta})$ and let $D=\varphi(B)\in\PD(\pi t_{s,\beta})$. Since $D$ and $B$ are bijection, let $\pop(B)=\pop(D)=(i,r)$. Our goal is to show that the bumpless pipe dream $m_{s,\beta}(B)$ and the pipe dream $m_{s,\beta}(D)$ are in bijection via $\varphi$. By Definition~\ref{def:bijection} of $\varphi$, it suffices to show that $\pop (m_{s,\beta}(B))=\pop(m_{s,\beta}(D))$ and that $\nabla(m_{s,\beta}(B))$ and $\nabla(m_{s,\beta}(D))$ are in bijection via $\varphi$.

\vskip 0.5em
\noindent\textbf{Case (1)(a)} of Lemma~\ref{lem:PD-induction-main} and Lemma~\ref{lem:BPD-induction-main}: $(s,\beta)\neq(\pi^{-1}(i+1),\pi^{-1}(i))$. By induction hypothesis where $\ell(\perm(\nabla D))=\ell(\perm(\nabla B))=\ell(\pi t_{s,\beta})-1=\ell$, as $\nabla B$ and $\nabla D$ are in bijection, $m_{s,\beta}(\nabla B)$ and $m_{s,\beta}(\nabla D)$ must be in bijection. Therefore, we can let $\rho:=\perm(m_{s,\beta}(\nabla B))=\perm(m_{s,\beta}(\nabla D))$. We further divide into subcases based on whether $i\in\DES_L(\rho)$ and $i+1\in\DES_L(\rho)$, as indicated by Lemma~\ref{lem:PD-induction-main} and Lemma~\ref{lem:BPD-induction-main}. 

If $i\notin\DES_L(\rho)$, by Lemma~\ref{lem:PD-induction-main} and Lemma~\ref{lem:BPD-induction-main}, $\pop(m_{s,\beta}(D))=(i,r)=\pop(m_{s,\beta}(B))$. Furthermore, as $m_{s,\beta}(\nabla D)$ and $m_{s,\beta}(\nabla B)$ are in bijection via $\varphi$, $\nabla(m_{s,\beta}(D))=m_{s,\beta}(\nabla D)$ and $\nabla(m_{s,\beta}(B))=m_{s,\beta}(\nabla B)$ are in bijection.

If $i\in\DES_L(\rho)$ and $i+1\notin$, $\pop(m_{s,\beta}(D))=(i+1,r)=\pop(m_{s,\beta}(B))$ by Lemma~\ref{lem:PD-induction-main} and Lemma~\ref{lem:BPD-induction-main}. Then as above, $\nabla(m_{s,\beta}(D))=m_{s,\beta}(\nabla D)$ and $\nabla(m_{s,\beta}(B))=m_{s,\beta}(\nabla B)$ are in bijection.

If $i,i+1\in\DES_L(\rho)$, $\pop(m_{s,\beta}(D))=(i+1,r)=\pop(m_{s,\beta}(B))$. As $m_{s,\beta}(\nabla D)$ and $m_{s,\beta}(\nabla B)$ are in bijection with $\ell(\rho)<\ell(\pi t_{s,\beta})$, induction hypothesis gives us that $m_{\rho^{-1}(i+2),\rho^{-1}(i+1)}(m_{s,\beta}(\nabla D))$ and $m_{\rho^{-1}(i+2),\rho^{-1}(i+1)}(m_{s,\beta}(\nabla B))$ are in bijection via $\varphi$. So $\nabla(m_{s,\beta}(D))$ and $\nabla(m_{s,\beta}(B))$ are in bijection as desired.

\vskip 0.5em
\noindent\textbf{Case (1)(b)} of Lemma~\ref{lem:PD-induction-main} and Lemma~\ref{lem:BPD-induction-main}: $(s,\beta)=(\pi^{-1}(i+1),\pi^{-1}(i))$. The same argument as in Case (1)(a) works here, by going through the different scenarios of whether $i,i+1\in\DES_L(\rho)$ and reading off that $\pop(m_{s,\beta}(B))=\pop(m_{s,\beta}(D))$ and that $\nabla(m_{s,\beta}(B))$ and $\nabla(m_{s,\beta}(D))$ are in bijection via induction hypothesis. 

\vskip 0.5em
The first commutative diagram, i.e. Case (2) of Lemma~\ref{lem:PD-induction-main} and Lemma~\ref{lem:BPD-induction-main}, follows basically from the same argument.

\vskip 0.5em
\noindent\textbf{Case (2)(a)} of Lemma~\ref{lem:PD-induction-main} and Lemma~\ref{lem:BPD-induction-main}: $\alpha\geq r$. As $\nabla D$ and $\nabla B$ are in bijection, by induction hypothesis, the PD $x_{\alpha}{\rightsquigarrow}(\nabla D)$ and the BPD $x_{\alpha}{\rightsquigarrow}(\nabla B)$ are in bijection via $\varphi$. So we can let $\rho=\perm(x_{\alpha}{\rightsquigarrow}(\nabla D))=\perm(x_{\alpha}{\rightsquigarrow}(\nabla B))$. As above, we see that $\pop(x_{\alpha}{\rightsquigarrow}D)=(i,r)=\pop(x_{\alpha}{\rightsquigarrow}B)$ if $i\notin\DES_L(\rho)$ and $\pop(x_{\alpha}{\rightsquigarrow}D)=(i+1,r)=\pop(x_{\alpha}{\rightsquigarrow}B)$ if $i\in\DES_L(\rho)$, so $\pop(x_{\alpha}{\rightsquigarrow}D)=\pop(x_{\alpha}{\rightsquigarrow}B)$.

Moreover, if $i,i+1\in\DES_L(\rho)$, as $x_{\alpha}{\rightsquigarrow}(\nabla D)$ and $x_{\alpha}{\rightsquigarrow}(\nabla B)$ are in bijection with $\ell(\rho)\leq\ell$, by  Case (1), $m_{\rho^{-1}(i+2),\rho^{-1}(i+1)}(x_{\alpha}{\rightsquigarrow}(\nabla D))=\nabla(x_{\alpha}{\rightsquigarrow}D)$ and $m_{\rho^{-1}(i+2),\rho^{-1}(i+1)}(x_{\alpha}{\rightsquigarrow}(\nabla B))=\nabla(x_{\alpha}{\rightsquigarrow}B)$ must be in bijection. If not both of $i,i+1$ belong in $\DES_L(\rho)$, we also have that $\nabla(x_{\alpha}{\rightsquigarrow}D)=x_{\alpha}{\rightsquigarrow}(\nabla D)$ is in bijection with $\nabla(x_{\alpha}{\rightsquigarrow}B)=x_{\alpha}{\rightsquigarrow}(\nabla B)$ by induction hypothesis.

Case (2)(b) is done in the same way.
\end{proof}
\subsection{Induction on pipe dreams}
Referring back to Definition~\ref{def:monk-pd}, we introduce more notations for $x_{\alpha}{\rightsquigarrow}$ and $m_{s,\beta}$'s on pipe dreams. Fix a permutation $\pi$ as in Section~\ref{sub:monk-pd}. These maps consist of \textit{basic Monk steps} of deleting \+-tiles (i.e. turning a \+-tile into a \elbow-tile), which we denote as $k^-$, and steps of adding \+-tiles (i.e. turning a \elbow-tile into a \+-tile), which we denote as $k^+$, $k=1,2,\ldots$. Write $(i_{k^-},j_{k^-})$ for the coordinate where step $k^-$ happens, and $(i_{k^+},j_{k^+})$ for the coordinate where step $k^+$ happens. To be precise, the map $x_{\alpha}{\rightsquigarrow}$ consists of steps $1^+,2^-,2^+,3^-,\ldots,q^+$ for some $q\geq1$ and the map $m_{s,\beta}$ consists of steps $1^-,1^+,2^-,2^+,\ldots,p^-,p^+$ for some $p\geq1$. We say that the sequence of coordinates $(i_{1^+},j_{1^+}),(i_{2^-},j_{2^-}),\ldots,(i_{q^+},j_{q^+})$ is the \textit{Monk footprints} for $x_{\alpha}{\rightsquigarrow}$ and the sequence $(i_{1^-},j_{1^-}),(i_{1^+},j_{1^+}),\ldots,(i_{p^+},j_{p^+})$ is the \textit{Monk footprints} for $m_{s,\beta}$.

Note that after a step $k^-$, we have a genuinely reduced pipe dream for $\pi$, and after each step $k^+$ except the last one, we have a double crossing at coordinate $(i_{k^+},j_{k^+})$ and $(i_{k+1^{-}},j_{k+1^{-}})$. Also, by definition, $i_k^{-}=i_k^{+}$ for all $k$. Let $i_k:=i_k^{-}=i_k^{+}$.
Also define, for $m_{s,\beta}$, the \textit{complete Monk footprints} to be
\[\bigcup_{k=1}^p\{(i_k,j_{k^-}),(i_k,j_{k^-}+1),\ldots,(i_k,j_{k^+})\},\]
and for $x_{\alpha}{\rightsquigarrow}$, the \textit{complete Monk footprints} to be
\[\{(i_1,j_{1^+})\}\bigcup_{k=2}^q\{(i_k,j_{k^-}),(i_k,j_{k^-}+1),\ldots,(i_k,j_{k^+})\}.\]
In other words, the complete Monk footprints are the Monk footprints union all the coordinates strictly between $(i_k,j_{k^-})$ and $(i_k,j_{k^+})$, whose tiles are all \+'s throughout the process of Definition~\ref{def:monk-pd}, as we show momentarily. 
\begin{ex}
Figure~\ref{fig:monk-pd-ex1} shows an example of a step by step computation of the Monk's rule on a pipe dream $D\in\PD(\pi t_{s,\beta})$ where $\pi=21786534$, $(s,\beta)=(2,5)$. 
\begin{figure}[h!]
\centering
\includegraphics[scale=0.3]{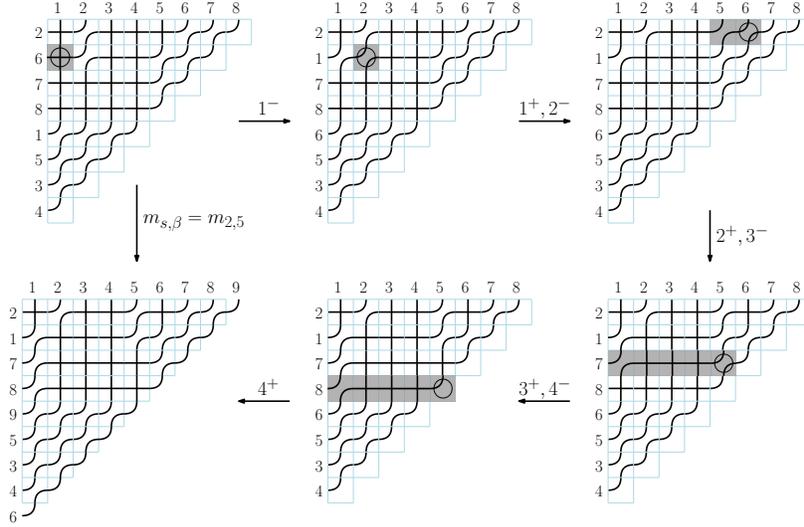}
\caption{An example for basic Monk steps for $m_{s,\beta}$ on some $D\in\PD(\pi t_{s,\beta})$ where $\pi=21786534$, $(s,\beta)=(2,5)$. The tiles that are going to be modified immediately are circled, and the complete Monk footprints are shaded. Notice that area above pipe $6$ is growing towards the SE direction.}
\label{fig:monk-pd-ex1}
\end{figure}
\end{ex}

\begin{lemma}\label{lem:monk-increase-one}
The complete Monk footprints consist of distinct coordinates.
\end{lemma}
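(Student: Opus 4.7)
The plan is to reduce the distinctness claim to the stronger statement that the row indices $i_1 < i_2 < \cdots$ of the successive basic Monk steps are strictly increasing. Once this is established, the horizontal segments $\{(i_k, j_{k^-}), (i_k, j_{k^-}+1), \ldots, (i_k, j_{k^+})\}$ sit in pairwise distinct rows and are therefore automatically disjoint, while within each row the coordinates listed are visibly distinct. The initial singleton $(i_1, j_{1^+})$ arising in the $x_\alpha{\rightsquigarrow}$ case fits into the same framework as a degenerate segment in row $i_1 = \alpha$.

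To establish strict monotonicity of the rows, I would analyze the geometry of the double crossing produced at each step. Immediately after step $k^-$ the current diagram is a genuinely reduced pipe dream, and step $k^+$ inserts a $+$-tile at $(i_k, j_{k^+})$ in place of a former elbow, creating a new crossing of two pipes $p$ and $q$. If this creates a double crossing, then $p$ and $q$ must already have met exactly once in the reduced diagram, and that preexisting crossing is precisely $(i_{k+1^-}, j_{k+1^-})$. A local inspection at $(i_k, j_{k^+})$ shows that after the swap the pipe $p$ exits to the south and $q$ exits to the east, so in order to bound a bigon the two must subsequently reconvene at a point strictly south and strictly east of $(i_k, j_{k^+})$. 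This yields the ``SE-growth'' $i_{k+1} > i_k$ and $j_{k+1^-} > j_{k^+}$ that is illustrated in Figure~\ref{fig:monk-pd-ex1}.

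The main obstacle is the careful pipe tracing needed to rule out the degenerate configurations where the preexisting crossing of $p$ and $q$ lies in the same row as $(i_k, j_{k^+})$, or, worse, lies NW of it. Both possibilities are excluded by combining the reducedness of the diagram just before step $k^+$ (which forbids $p$ and $q$ from crossing more than once) with the direction-of-travel argument for the two pipe segments leaving $(i_k, j_{k^+})$ toward the S and E, together with a short check that no crossing strictly between $(i_k, j_{k^-})$ and $(i_k, j_{k^+})$ is ever modified until the process terminates (so the intermediate coordinates on the row also stay distinct from later footprints). Once SE-growth of the footprint is secured, the lemma follows without further calculation in both the $m_{s,\beta}$ and $x_\alpha{\rightsquigarrow}$ cases, with the initial singleton $(i_1, j_{1^+})$ handled by the very same direction argument applied to the first insertion.
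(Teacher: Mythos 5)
Your reduction rests on the claim that the row indices of successive basic Monk steps strictly increase (``SE-growth''), and this claim is false, as is the directional statement that the two pipes leaving the newly inserted cross-tile must reconvene strictly south and strictly east of it. In a pipe dream every pipe runs monotonically from the north border to the west border, so at the insertion tile $(i_k,j_{k^+})$ the two pipes involved enter from the north and from the east and exit to the west and to the south; their unique preexisting crossing therefore lies either on the two upstream branches, hence strictly \emph{north} of row $i_k$ and strictly east of column $j_{k^+}$, or on the two downstream branches, hence strictly south of row $i_k$ and weakly \emph{west} of column $j_{k^-}$. It is never strictly SE of $(i_k,j_{k^+})$, and the first (northward) alternative genuinely occurs. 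Concretely, let $D$ be the pipe dream with cross-tiles at $(1,2),(1,3),(2,2)$, so $D\in\PD(\sigma)$ with $\sigma=1432$, and apply $x_3{\rightsquigarrow}$: step $1^+$ inserts a cross at $(3,1)$, where pipes $2$ and $3$ meet, and these two pipes already cross at $(2,2)$; so step $2^-$ deletes $(2,2)$ and step $2^+$ inserts at $(2,3)$, terminating in a reduced pipe dream of $\sigma t_{3,5}=14523$. Here $i_1=3$ and $i_2=2$: the rows decrease. (The lemma still holds in this example, but not for your reason.) So the segments $C_k$ cannot be shown disjoint by placing them in distinct rows, and the column inequality $j_{k+1^-}>j_{k^+}$ fails in the downstream case as well.

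The paper avoids any coordinatewise monotonicity of the footprints and instead tracks a single distinguished pipe $b$, namely the one occupying the northwest arc of each successive insertion tile. It shows by induction that each insert/delete swap reroutes $b$ onto the SE boundary of the bigon it bounded with the other pipe, so that the region NW of $b$ strictly grows, and that $C_k$ lies in the annulus between the old and new positions of $b$; these annuli are pairwise disjoint whether the footprint moves NE or SW. Any repair of your argument needs a nested-region invariant of this kind in place of the false strict growth of the rows.
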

\begin{proof}
We first consider $m_{s,\beta}$ applied to $D\in\PD(\pi t_{s,\beta})$. Keep the notations above for Monk footprints $(i_{1^-},j_{1^-}),(i_{1^+},j_{1^+}),\ldots,(i_{p^+},j_{p^+})$. Let $b=\pi(\beta)$, which is the larger pipe number among the two pipes that intersect at $(i_{1^-},j_{1^-})$ in $D$. 

For $k=1,\ldots,p$, let $D^{k}\in\PD(\pi)$ be the pipe dream obtained from $D$ after the basic Monk step $k^-$. The readers are strongly recommended to refer to Figure~\ref{fig:monk-pd-ex1} for a visualization, where $b=\pi^{-1}(5)=6$. 

We use induction on $k$ to show that the grid $(i_{k^+}, j_{k^+})$ in $D^k$, which is a \elbow-tile by definition, contains the pipe $b$ as its \jt\  part.

For the base case $k=1$, starting with $D$, we replace the \+-tile at $(i_{1^-},j_{1^-})$ with a \elbow-tile, so the \rt\ part of this tile is pipe $b$. By Definition~\ref{def:monk-pd}, we search towards the right in $D^1$ for the first \elbow-tile at $(i_{1^+}, j_{1^+})$, which must contain pipe $b$ as its \jt\ part. 

Assume that we have proved this claim for $k-1$, $k\leq p$. To do the step $(k-1)^+$, we insert a \+-tile in $D^{k-1}$ at coordinate $(i_{k-1^+},j_{k-1^+})$, which is a \elbow-tile in $D^{k-1}$ that contains pipe $b$ as its \jt\ part by induction hypothesis. As this creates a double crossing with the \+-tile in $D^{k-1}$ at $(i_{k^-},j_{k^-})$, there must be two pipes $b$ and $c$ that pass through the tiles $(i_{k-1^+},j_{k-1^+})$ and $(i_{k^-},j_{k^-})$ in $D^{k-1}$; and moreover, pipe $b$ must be on the NW side of pipe $c$ within this region, since it starts as the \jt\ part of $(i_{k-1^+},j_{k-1^+})$. Denote these two paths between $(i_{k-1^+},j_{k-1^+})$ and $(i_{k^-},j_{k^-})$ as $pb$ and $pc$, for pipe $b$ and pipe $c$ respectively. After we insert a \+-tile at $(i_{k-1^+},j_{k-1^+})$ and delete the \+-tile at $(i_{k^-},j_{k^-})$ to obtain $D^k$, pipe $b$ now travels between these two tiles via path $pc$, and contains the \rt\ part of $(i_{k^-},j_{k^-})$. Therefore, searching from this tile towards the right for the first \elbow-tile at $(i_{k^+},j_{k^+})$, we see that it's \jt\ part belongs to pipe $b$. The induction step is now complete.

The claim and the above analysis immediately implies that the area bounded by the pipe $b$ and the $x,y$-axis in $D^k$ strictly increases as $k$ increases. This monotonicity is enough for us to conclude the lemma statement, as the added tiles in the complete Monk footprints after each step lie between the pipe $b$ in $D^{k-1}$ and the pipe $b$ in $D^k$, so they will never overlap.

The proof for $x_{\alpha}{\rightsquigarrow}$ is exactly the same so we will not repeat the details here. 
\end{proof}

We note that Lemma~\ref{lem:monk-increase-one} gives us that $\pop(m_{s,\beta}(D))$, $\pop(x_{\alpha}{\rightsquigarrow} D)$ is either $(i,r)$ or $(i+1,r)$, if $\pop(D)=(i,r)$. 

\begin{lemma}\label{lem:PD-induction-main}
Let $\pi\in S_\infty$. The following statements are true.
\begin{enumerate}
\item Suppose $D\in\PD(\pi t_{s,\beta})$, where $s<\beta$ and $\pi t_{s,\beta}\gtrdot \pi$. Let $\pop(D)=(i,r)$.

\begin{enumerate}
\item If $(s,\beta)\neq (\pi^{-1}(i+1), \pi^{-1}(i))$, let  $\rho:=\perm(m_{s,\beta} (\nabla D))$. Then 
\[\pop(m_{s,\beta}(D))=
\begin{cases}(i+1,r) & \text{ if } i\in \DES_L(\rho)\\(i,r) & \text {otherwise}
\end{cases}. \] 
Furthermore, 
\[\nabla(m_{s,\beta}(D))=
\begin{cases}m_{\rho^{-1}(i+2),\rho^{-1}(i+1)}(m_{s,\beta}(\nabla D)) & \text{ if } i,i+1\in \DES_L(\rho)\\
m_{s,\beta}(\nabla D) & \text {otherwise}
\end{cases}. \] 
\item If $(s,\beta)= (\pi^{-1}(i+1), \pi^{-1}(i))$, then $\pop(m_{s,\beta}(D))=(i+1,r)$. Furthermore, let $\rho:=\perm(\nabla(D))$. Then,
\[\nabla(m_{s,\beta}(D))=
\begin{cases}m_{\rho^{-1}(i+2),\rho^{-1}(i+1)}(\nabla D) & \text{ if } i+1\in \DES_L(\rho)\\
\nabla D & \text {otherwise}
\end{cases}. \] 
\end{enumerate}

\item Suppose $D\in\PD(\pi)$ and $\pop(D)=(i,r)$. 
\begin{enumerate}
\item If $\alpha\ge r$, let $\rho:=\perm(x_\alpha{\rightsquigarrow} (\nabla D))$. Then \[\pop(x_\alpha{\rightsquigarrow} D)=
\begin{cases}(i+1,r) & \text{ if } i\in \DES_L(\rho)\\(i,r) & \text {otherwise}
\end{cases}. \] 
Furthermore, 
\[\nabla(x_\alpha{\rightsquigarrow} D)=
\begin{cases}
m_{\rho^{-1}(i+2),\rho^{-1}(i+1)}(x_\alpha{\rightsquigarrow}(\nabla D)) & \text{ if } i,i+1\in \DES_L(\rho)\\
x_\alpha{\rightsquigarrow}(\nabla D) & \text {otherwise}
\end{cases}. \] 
\item If $\alpha < r$, then $\pop(x_\alpha \rightsquigarrow D)=(\alpha, \alpha)$, and $\nabla(x_\alpha{\rightsquigarrow} D)=D$.
\end{enumerate}
\end{enumerate}
\end{lemma}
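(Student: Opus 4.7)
The strategy is to directly compare the basic-Monk-step trajectories of $D$ and $\nabla D$ under the given Monk operation and read off $\pop$ and $\nabla$ from the resulting pipe dreams, using the monotonicity of complete Monk footprints established in Lemma~\ref{lem:monk-increase-one} as the main structural input. Throughout, the first \+-tile of $D$ sits at coordinates $(r, i-r+1)$, and its presence is the only feature that distinguishes $D$ from the smaller pipe dream $\nabla D$.

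First I would dispose of Case~(2)(b), which is the cleanest. When $\alpha < r$, every row above row $r$ of $D$ is all-elbow, so the leftmost \elbow-tile of row $\alpha$ sits at column $1$. Turning it into a \+-tile creates no double crossing (no \+-tiles exist in the NW quadrant of this position), and the new tile precedes every \+-tile of $D$ in the reading order. Hence $\pop(x_\alpha{\rightsquigarrow} D) = (\alpha, \alpha)$ and inverting the single basic step recovers $D$.

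For the remaining cases I would set up a stepwise comparison. Let $D^{k}$ and $(\nabla D)^{k}$ denote the pipe dreams obtained after the $k$-th basic-Monk pair $(k^-, k^+)$ of the relevant operation applied to $D$ and $\nabla D$ respectively. Because each basic step acts locally within a single row of the complete Monk footprints, the two trajectories run in parallel until the footprints first reach row $r$ at or east of column $i-r+1$; until that moment the extra \+-tile of $D$ is passive and the same sequence of tile swaps occurs in both trajectories. When the process does reach this region, either the extra \+-tile lies outside the footprint being formed (so the first \+-tile of $D$ survives in $m_{s,\beta}(D)$, giving $\pop=(i,r)$), or it forces an additional $k^-, k^+$ pair compared to the $\nabla D$ trajectory, effectively shifting the newly created \+-tile one step east to column $i-r+2$ (giving $\pop=(i+1,r)$) and possibly propagating further. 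The two descent conditions $i\in\DES_L(\rho)$ and $i+1\in\DES_L(\rho)$ on $\rho = \perm(m_{s,\beta}(\nabla D))$ encode, at the level of the output, which of these scenarios takes place. For Case~(1)(b), the first basic step of $m_{s,\beta}$ on $D$ targets exactly the first \+-tile of $D$ (since the two pipes selected are pipes $s=\pi^{-1}(i+1)$ and $\beta=\pi^{-1}(i)$, which are precisely the pipes meeting at $(r, i-r+1)$), so the process factors as ``perform $\nabla$, then continue with the tail of $m_{s,\beta}$,'' matching the stated formula.

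The main obstacle will be the double-descent case $i, i+1 \in \DES_L(\rho)$, where one must establish
\[
\nabla\bigl(m_{s,\beta}(D)\bigr) \;=\; m_{\rho^{-1}(i+2), \rho^{-1}(i+1)}\bigl(m_{s,\beta}(\nabla D)\bigr).
\]
This requires identifying precisely which pipes cross at the newly-shifted first \+-tile of $m_{s,\beta}(D)$, verifying that they are the pipes labeled $\rho^{-1}(i+1)$ and $\rho^{-1}(i+2)$, and then checking tile-by-tile that the cascade of basic steps triggered by applying $\nabla$ to $m_{s,\beta}(D)$ coincides with the Monk process $m_{\rho^{-1}(i+2), \rho^{-1}(i+1)}$ run on $m_{s,\beta}(\nabla D)$. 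Here the geometric monotonicity from Lemma~\ref{lem:monk-increase-one} is invoked again to constrain where the secondary Monk footprints can land, so that the required double crossings actually line up and the two compositions produce identical pipe dreams.
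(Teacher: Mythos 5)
Your proposal follows essentially the same route as the paper's proof: dispose of the degenerate cases (1)(b) and (2)(b) directly, then run the basic Monk steps on $D$ and $\nabla D$ in parallel and split according to whether and where the first \+-tile $(r,i-r+1)$ meets the complete Monk footprints, invoking Lemma~\ref{lem:monk-increase-one} to control the propagation and to justify that the tail of the process on $m_{s,\beta}(D)$ is simulated by $m_{\rho^{-1}(i+2),\rho^{-1}(i+1)}$ on $m_{s,\beta}(\nabla D)$. The one place where you describe rather than carry out the argument --- verifying that the local trichotomy (first \+-tile outside the footprints, in the last footprint block, or in an earlier block) is exactly governed by the conditions $i\in\DES_L(\rho)$ and $i+1\in\DES_L(\rho)$, and that the shifted crossing at $(r,i-r+2)$ is indeed between pipes $\rho^{-1}(i+1)$ and $\rho^{-1}(i+2)$ --- is precisely the content of the paper's Subcases (1)(a)(i)--(iii), so filling it in would reproduce the paper's proof.
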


Figure~\ref{fig:pd-subcase1a3} demonstrates an example of a critical case of this Lemma, showing how the operators $\nabla$ and $m_{s,\beta}$'s on the pipe dreams interact. 

\begin{proof}[Proof of Lemma~\ref{lem:PD-induction-main}]
We will focus more on Case (1)(a), since Case (1)(b) is essentially a degenerate Case of (1)(a), and Case (2) uses the same argument as Case (1).
\vskip 0.5em

\noindent\textbf{Case (1)(a):} $D\in\PD(\pi t_{s,\beta})$, $s<\beta$, $\pi t_{s,\beta}\gtrdot\pi$, $\pop(D)=(i,r)$ and $(s,\beta)\neq(\pi^{-1}(i+1),\pi^{-1}(i))$. Recall that $\pop(D)=(i,r)$ means that the first \+-tile of $D$ is located at $(r,i-r+1)$, i.e. the $r$th row is the top row of $D$ with a \+-tile, and that the \+-tile in the $r$th row furthest to the right has coordinate $(r,i-r+1)$, which is the intersection of pipe $i$ and $i+1$. The pipe dream $\nabla D$ is obtained from $D$ by deleting this \+-tile.

Assume that $m_{s,\beta}D$ consists of basic Monk steps $1^-,1^+,\ldots,p^-,p^+$, at Monk footprints $F=\{(i_{1^-},j_{1^-}),\ldots,(i_{p^+},j_{p^+})\}$. Let the complete Monk footprints be \[C=\bigcup_{k=1}^p C_k,\quad\text{where }C_k:= \{(i_{k},j_{k^-}),(i_k,j_{k^-}+1),\ldots,(i_k,j_{k^+})\}.\]
We further divide into subcases. In all the subcases, let $\rho:=\perm(m_{s,\beta}(\nabla D))$.
\vskip 0.5em

\noindent\textbf{Subcase (1)(a)(i):} $(r,i-r+1)\notin C$. 

This condition of the current subcase directly gives us $\pop(m_{s,\beta}(D))=(i,r)$. We then compare the process of $m_{s,\beta}$ applied to $D$ and $\nabla D$. Each basic monk move that is applied to $D$ will be applied to $\nabla D$ in the exact same way, as the first \+-tile of $D$, which is the only difference between $D$ and $\nabla D$, is not part of the conversation. In the end, we obtain that $m_{s,\beta}(\nabla D)$ equals $m_{s,\beta}(D)$ taken away the first \+-tile at $(r,i-r+1)$, i.e. $m_{s,\beta}(\nabla D)=\nabla(m_{s,\beta}(D))$. Moreover, recall that $\rho=\perm(m_{s,\beta}(\nabla D))$, so $\perm(m_{s,\beta}D)=s_i\rho$ since $m_{s,\beta}D$ and $m_{s,\beta}(\nabla D)$ differs at the \+-tile at $(r,i-r+1)$ which corresponds to the simple transposition $s_i$. By the reduced criterion of these pipe dreams, $i\notin\DES_L(\rho)$. 
\vskip 0.5em

\noindent\textbf{Subcase (1)(a)(ii):} $(r,i-r+1)\in C_p$.

This condition of the current subcase is saying that when we are doing the Monk's rule $m_{s,\beta}$D, the last two basic steps are deleting the \+-tile at $(i_p,j_{p^-})$, which resolves a previous double crossing, and then inserting a \+-tile at $(r,i-r+2)$, which does not create a double crossing so that we stop here. Consequently, $\pop(m_{s,\beta}D)=(i+1,r)$ by definition. 

Again, we compare the process $m_{s,\beta}(D)$ with $m_{s,\beta}(\nabla D)$. The basic Monk steps are the same before step $p$. At step $(p-1)^+$, in $m_{s,\beta}(D)$, a double crossing is created with the \+-tile at coordinate $(i_p,j_{p^-})=(r,j_{p^-})$, which is one of the \+-tile in $D$ belonging to the block of rightmost \+-tiles in row $r$. We split the discussions based on whether $(r,j_{p^-})$ is the rightmost \+-tile in row $r$ of $D$, i.e. whether $(r,j_{p^-})$ is in $\nabla D$ or not, and then arrive at the same conclusion.

If $(r,j_{p^-})\notin\cross(\nabla D)$, i.e. $j_{p^-}=i-r+1$, we know that $m_{s,\beta}(\nabla D)$ ends at step $(p-1)^+$. The permutation $\rho=\perm(m_{s,\beta}(\nabla D))$ is also the permutation we obtained from the process of $m_{s,\beta}$ applying to $D$ after move $p^-$ so $\rho=\pi$ in fact. The move $p^-$ removes a \+-tile that corresponds to the simple transposition $s_i$, so $i\in\DES_L(\rho)$. And the move $p^+$ adds a \+-tile at coordinate $(r,i-r+2)$ that corresponds to the simple transposition $s_{i+1}$, so $\perm(m_{s,\beta}D)=s_{i+1}\rho$ and $i+1\notin\DES_L(\rho)$. Moreover, $\nabla(m_{s,\beta}D)$ removes the \+-tile at $(r,i-r+2)$ from $m_{s,\beta}D$ so we observe that $\nabla(m_{s,\beta}D)=m_{s,\beta}(\nabla D)$.

If $(r,j_{p^-})\in\cross(\nabla D)$, i.e. $j_{p^-}<i-r+1$, then the process of $m_{s,\beta}$ applied to $\nabla D$ deletes the \+-tile at $(r,j_{p^-})$ at step $p^-$, and adds a \+-tile at $(r,i-r+2)$ at step $p^+$, arriving at the pipe dream that equals $m_{s,\beta}$ applying to $D$ after step $p^-$, which is reduced so we stop. This also gives $\nabla(m_{s,\beta}D)=m_{s,\beta}(\nabla D)$. In $m_{s,\beta}(\nabla D)$, the coordinate $(r,i-r+1)$ contains a \+-tile so $i\in\DES_L(\rho)$ and since adding a \+-tile to coordinate $(r,i-r+2)$ yields a reduced pipe dream $m_{s,\beta}D$, we have $i+1\notin\DES_L(\rho)$.

The conclusion does not depend on whether $(r,j_{p^-})\in\cross(\nabla D)$.
\vskip 0.5em

\noindent\textbf{Subcase (1)(a)(iii):} $(r,i-r+1)\in C_q$, for some $q<p$. By Lemma~\ref{lem:monk-increase-one}, $\pop(m_{s,\beta}D)=(i+1,r)$ since the first \+-tile of $m_{s,\beta}D$ must be located at $(r,i-r+2)$.

As above, the basic Monk steps for $m_{s,\beta}D$ and $m_{s,\beta}(\nabla D)$ agree for the steps up to $(q-1)^+$, where in $m_{s,\beta}D$, a double crossing is created at $(i_{q-1^+},j_{q-1^+})$ and at $(i_{q^-},j_{q^-})=(r,j_{q^-})$. Either $(r,j_{q^-})\notin\cross(\nabla D)$ or $(r,j_{q^-})\in\cross(\nabla D)$. Both situations will yield the same conclusion.

If $(r,j_{q^-})\notin\cross(\nabla D)$, i.e. $j_{q^-}=i-r+1$, then the process of $m_{s,\beta}$ applied to $\nabla D$ ends after step $(q-1)^+$. Since $m_{s,\beta}(\nabla D)$ would have a double crossing if $(r,i-r+1)$ were a \+-tile, $s_i\rho$ is not reduced and thus $i\in\DES_L(\rho)$. Now, $m_{s,\beta}(\nabla D)\in\BPD(\rho)$ is precisely the pipe dream of $m_{s,\beta}$ applied to $D$ after move $q^-$. The step $q^+$ of applying $m_{s,\beta}$  to $D$ adds a \+-tile at coordinate $(r,i-r+2)$ whose corresponding simple transposition is $s_{i+1}$, which creates a double crossing with the \+-tile at $(i_{q+1^-},j_{q+1^-})$, as $q<p$. This means that $i+1\in\DES_L(\rho)$. 

To simulate the steps $q+1^-,q+1^+,\ldots,p^-,p^+$ of $m_{s,\beta} D$, we can uncross the \+-tile at coordinate $(i_{q+1^-},j_{q+1^-})$, which is a crossing between pipe $i+1$ and $i+2$, of $m_{s,\beta} D$, and continue with step (2) of Definition~\ref{def:monk-pd}. This is saying that the basic Monk steps of $m_{\rho^{-1}(i+2),\rho^{-1}(i+1)}$ applying to $m_{s,\beta}(\nabla D)$ are precisely the same as the basic Monk steps $q+1^-,q+1^+,\ldots,p^-,p^+$ of $m_{s,\beta}$ applying to $D$, as the \+-tile at coordinate $(r,i-r+2)$ will not be involved in these steps by Lemma~\ref{lem:monk-increase-one}. See Figure~\ref{fig:pd-subcase1a3} for an example.
\begin{figure}[h!]
\centering
\includegraphics[scale=0.3]{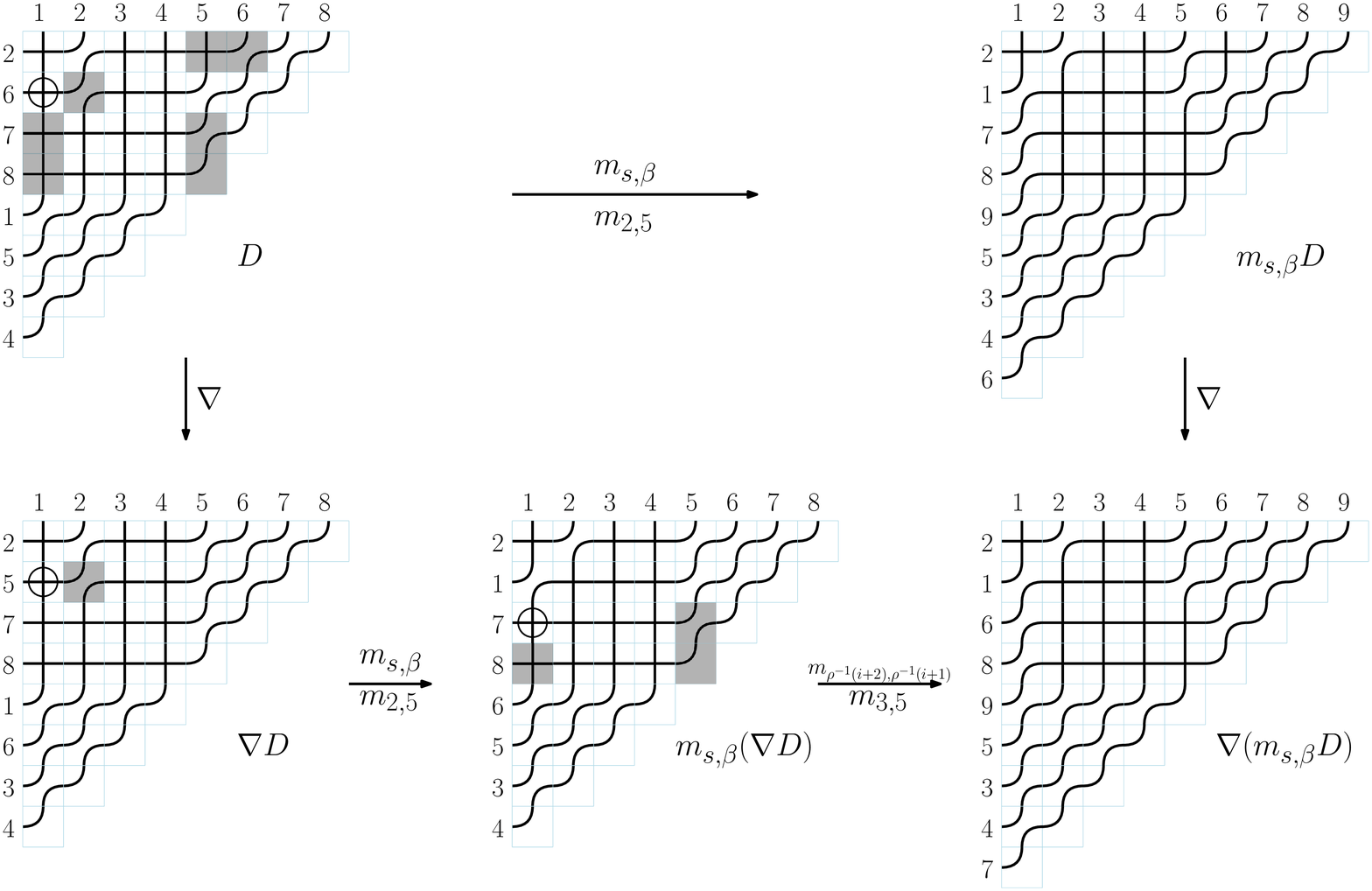}
\caption{An example for Subcase (1)(a)(iii) of Lemma~\ref{lem:PD-induction-main}, with $\pi=\rho=21786534$, $(i,r)=(5,1)$, $(s,\beta)=(2,5)$. The first step $1^-$ of each Monk move is circled and all the other grids in Monk's footsteps are shaded.}
\label{fig:pd-subcase1a3}
\end{figure}

If $(r,j_{q^-})\in\cross(\nabla D)$, i.e. $j_{q^-}<i-r+1$, then the the process of $m_{s,\beta}$ applied to $\nabla D$ ends after step $q^+$, which yields a pipe dream that equals $m_{s,\beta}$ applied to $D$ after step $q^-$ so $\rho=\perm(m_{s,\beta}(\nabla D))=\pi$. The first \+-tile of $m_{s,\beta}(\nabla D)$ is at coordinate $(r,i-r+1)$ so $i\in\DES_L(\rho)$, and since we would have a double crossing if the coordinate $(r,i-r+2)$ were a \+-tile, $i+1\in\DES_L(\rho)$. To simulate the steps $q+1^-,q+1^+,\ldots,p^-,p^+$ for computing $m_{s,\beta}$ applied to $D$, we can as above uncross the \+-tile at $(i_{q+1^-},j_{q+1^-})$ by applying $m_{\rho^{-1}(i+2),\rho^{-1}(i+1)}$, and concluding that $\nabla (m_{s,\beta}D)=m_{\rho^{-1}(i+2),\rho^{-1}(i+1)} m_{s,\beta}(\nabla D)$.

As a summary for Case (1)(a), we divide into subcases based on the local conditions of whether and where the first \+-tile $(r,i-r+1)$ of $D$ appears in the complete Monk footprints $C$, but it turns out that this division is also governed by the ``global condition" of whether $i\in \DES_L(\rho)$ and $i+1\in\DES_L(\rho)$ where $\rho=\perm(m_{s,\beta}(\nabla D))$. Specifically, if $i\notin\DES_L(\rho)$, we are in Subcase (1)(a)(i); if $i\in\DES_L(\rho)$ and $i+1\notin\DES_L(\rho)$, we are in Subcase (1)(a)(ii); if $i,i+1\in\DES_L(\rho)$, we are in subcase (1)(a)(iii). The analysis above within the subcases agree with the lemma statement, so we are done with Case (1)(a).
\vskip 0.5em

\noindent\textbf{Case (1)(b):} $(s,\beta)=(\pi^{-1}(i+1),\pi^{-1}(i))$ where $\pop(D)=(i,r)$. As in case (1)(a), let the Monk footprints be $F=\{(i_{1^-},j_{1^-}),\ldots,(i_{p^+},j_{p^+})\}$ and define $\rho:=\perm(\nabla D)$. The case condition says that $(i_{1^-},j_{1^-})=(r,i-r+1)$, $(i_{1^+},j_{1^+})=(r,i-r+2)$. By Lemma~\ref{lem:monk-increase-one}, the first \+-tile remains at coordinate $(r,i-r+2)$ throughout the rest of the Monk steps $2^-,\ldots,p^+$. Thus, we already have $\pop(m_{s,\beta} D)=(i+1,r)$.
\vskip 0.5em

\noindent\textbf{Subcase (1)(b)(i):} $p=1$. Here, the \+-tile at $(r,i-r+2)$, which corresponds to the simple transposition $s_{i+1}$, does not create a double crossing, so $i+1\notin \DES_L(\rho)$. As $m_{s,\beta}(D)$ is obtained from $D$ by moving the first \+-tile one step right, $\nabla (m_{s,\beta}D)=\nabla D$ as desired.
\vskip 0.5em

\noindent\textbf{Subcase (1)(b)(ii):} $p\geq2$. Here, the \+-tile at $(r,i-r+2)$ creates a double crossing after step $1^+$, so $i+1\in\DES_L(\rho)$. To simulate the basic Monk steps $2^-,2^+,\ldots,p^-,p^+$ of $m_{s,\beta}$ applied to $D$ on $\nabla D$, we need to remove the \+-tile at $(i_{2^-},j_{2^-})$ which is the crossing between pipe $i+1$ and $i+2$, and then continue applying Step (2) of Definition~\ref{def:monk-pd}. This means that $m_{\rho^{-1}(i+2),\rho^{-1}(i+2)}$ 
applied to $\nabla D$ gives us $m_{s,\beta} D$ without the first \+-tile at $(r,i-r+2)$. So $m_{\rho^{-1}(i+2),\rho^{-1}(i+2)}(\nabla D)=\nabla(m_{s,\beta}D)$ as desired.
\vskip 0.5em

We are now done with Case (1), which is the map $m_{s,\beta}$. Case (2) deals with the map $x_{\alpha}{\rightsquigarrow}$. The exact same argument from Case (1)(a) is applicable to Case (2)(a), by dividing into subcases based on whether the first \+-tile of $D$ appears in the Monk footprints of $m_{s,\beta}D$, analyzing whether $i\in\DES_L(\rho)$, $i+1\in\DES_L(\rho)$ where $\rho:=\perm(x_{\alpha}{\rightsquigarrow} (\nabla D))$, and simulating the Monk steps of $m_{s,\beta}D$ via $x_{\alpha}{\rightsquigarrow} (\nabla D)$ and potentially one more Monk move $m_{\rho^{-1}(i+2),\rho^{-1}(i+1)}$. So we will not repeat the details here. Case (2)(b) is a degenrate case where the newly inserted \+-tile at coordinate $(\alpha,1)$ has become the first \+-tile of $x_{\alpha}{\rightsquigarrow} D$. We evidently have $\pop(x_{\alpha}{\rightsquigarrow} D)=(\alpha,\alpha)$ and $\nabla (x_{\alpha}{\rightsquigarrow} D)=D$ as desired.
\end{proof}

\subsection{Induction on bumpless pipe dreams}

Before spelling out the technical details, we take a moment to explain our ideas intuitively. Our main idea is to perform two corresponding Monk's rule computations on the bumpless pipe dreams $D$ and $\nabla D$ in parallel and study how these two processes are related. Generically, by ``corresponding Monk's rule computations'' we mean applying $x_\alpha{\rightsquigarrow}$ or $m_{s,\beta}$ to both $D$ and $\nabla D$. Lemma \ref{lem:BPD-induction-main} spells out the special cases in detail. Each computation of Monk's rule consists of a sequence of basic Monk moves. We will see that in most cases, we may cut the sequence of moves applied to $D$ into $m$ subsequences, and also cut the sequence applied to $\nabla D$ as $m$ corresponding subsequences (two corresponding subsequences might have different number of basic moves), such that before the start of each pair of corresponding subsequences, the two (almost) bumpless pipe dreams are related by $\nabla$, and the first basic moves in the corresponding subsequences start at \emph{corresponding tiles} (which we define below). Therefore in most cases, applying Monk's rule commutes with applying $\nabla$. In the critical cases when they don't commute, we may still find corresponding sequences as described above until a basic Monk move affect the last column move rectangle in a certain way. The detailed analysis of the critical cases is in Lemma \ref{lem:BPD-induction-main}. The technical difficulties on bumpless pipe dreams, as compared to on pipe dreams, come from the fact that $\nabla$ on pipe dreams is simply removing a cross, whereas on bumpless pipe dreams the procedure is not local.

We first extend the definition of $\nabla$ (Definition~\ref{def:pop}) to almost bumpless pipe dreams by introducing the following column moves:
\begin{enumerate}
\item[(2')] In the column moves defined in Step (2) of Definition \ref{def:pop}, simultaneously replace exactly one \+-tile of the same two pipes in the input and output with a \bt-tile. This is illustrated in Figure \ref{fig:newcolmoves2}.
 
\item[(3')] To extend the column moves defined in Step (3) of Definition \ref{def:pop}, add the following column moves: \begin{enumerate}
         \item If $(x,y)$ is a marked tile and the pipe passing through $(x,y+1)$ is $y+1$, and the \bt-tile is in column $y$ below row $x$, follow the same rule as described in Step (3) of Definition \ref{def:pop};
         \item If $(x,y)$ is a marked tile, pipes $y$ and $y+1$ cross at $(x',y+1)$ for some $x'>x$, and there is a \bt-tile of pipes $y+1$ and $q$ with coordinate $(z,y+1)$ for some $x<z<x'$, define the column move by first replacing the \bt-tile by a \+-tile and following Step (3) of Definition 3.1, and then replace the \+-tile of $y$ and $q$ by a \bt-tile.
     \end{enumerate}
     \begin{figure}[h!]
    \centering
    \includegraphics[scale=0.6]{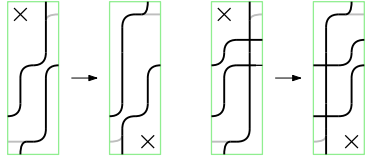}
    \caption{Extra column moves for almost bumpless pipe dreams in (2'). We suppress the four possibilities for each arrow by using gray  to indicate other possibilities.}
    \label{fig:newcolmoves2}
\end{figure}
      \begin{figure}[h!]
    \centering
    \includegraphics[scale=0.6]{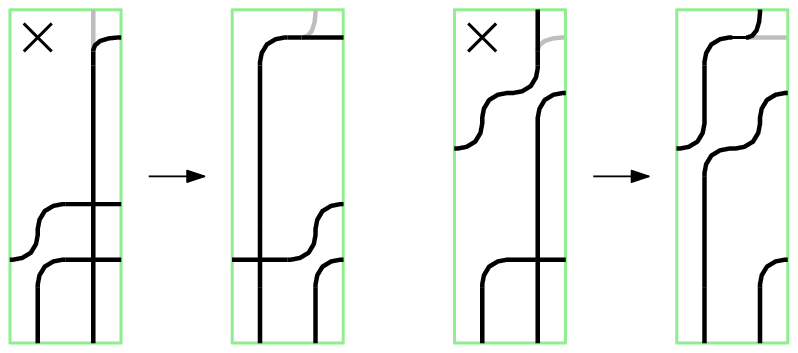}
    \caption{Extra column moves for almost bumpless pipe dreams in (3').}
    \label{fig:newcolmoves3}
\end{figure}
 \end{enumerate}

\begin{remark}
The operator $\nabla$ is not defined on all almost bumpless pipe dreams for $\pi$. See Figure \ref{fig:undefined} for examples. Specifically, there are the following cases. If $(x,y)$ is a marked tile, let $p$ be the pipe that passes through $(x,y+1)$ which is either a \vtile-tile or \rt-tile. Let $(x',y+1)$ be the position of the \jt-turn of $p$ in column $y+1$. If such a position does not exist, the column move is defined. If this is a \jt-tile, and the tile at $(x',y)$ is a \bt-tile of pipes $p$ and $q$, $q$ must also intersect $p$ at a tile between row $x$ and $x'$. In this case a column move is undefined. If $(x',y+1)$ is a \bt-tile, let $q$ be the other pipe of this \bt. If $p=y$ and $q=y+1$, the column move is undefined. Suppose now we can find $(x'',y+1)$ that is the \jt-tile of $q$ in column $y+1$. If $p$ and $q$ cross at $(x'',y)$, the column move is also not defined. It is okay to have these undefined cases, because these configurations will not appear right before the first moves of each pair of corresponding subsequences of basic Monk moves, in which case we need $\nabla$ to be defined.
\begin{figure}[h!]
    \centering
    \includegraphics[scale=0.6]{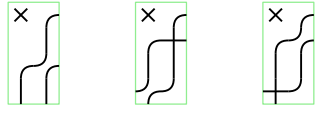}
    \caption{Examples for when a column move is not defined on an almost bumpless pipe dream (the bottom row of the first rectangle is also the bottom row of the bumpless pipe dream)}
    \label{fig:undefined}
\end{figure}
\end{remark}

Let $\D$ be an (almost) bumpless pipe dream of $\pi$. Suppose after this extension of the definition of $\nabla$, $\nabla$ is defined on $\D$.

\begin{defin}
Let $\E=\nabla \D$, and suppose $\pop(D)=(i,r)$. We say that a tile $(a,b)$ with $a\ge r$ in $\D$ and a tile $(a',b')$ with $a'\ge r$ in $\E$ are \emph{corresponding tiles} if in the case that $\D$ and $\E$ each contains a \bt-tile $(a,b)$ and $(a',b')$ are these \bt-tiles, or in the case that neither $\D$ nor $\E$ contains \bt-tiles, $(a,b)$ and $(a',b')$ are both \rt-tiles of pipes $\perm(D)(x)$ and $\perm(E)(x)$ for some $x$, and $a=a'$.  
\end{defin}

\begin{ex}
In Figure  \ref{fig:corresponding-tiles}, the first $(\D,\E)$ pair has pairs of corresponding tiles $((2,4),(2,3))$, $((3,3),(3,4))$, $((4,2),(4,2))$, $((4,6),(4,6))$, $((5,5),(5,4))$, and $((6,4),(6,5))$; the second $(\D,\E)$ pair has $((3,4),(4,3))$ as a pair of corresponding tiles.
\begin{figure}
    \centering
    \includegraphics[scale=0.6]{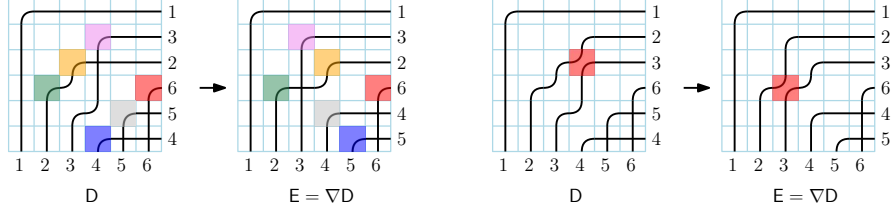}
    \caption{Examples of corresponding tiles}
    \label{fig:corresponding-tiles}
\end{figure}
\end{ex}

We remark that in the process of performing two corresponding Monk's rule computations on $D$ and $\nabla D$, at the beginning of each corresponding subsequences of basic moves, there will always be a single pair of corresponding tiles of interest that is determined by the algorithm, even though there can be multiple pairs of corresponding tiles in two bumpless pipe dreams related by $\nabla$ according to the definition. Furthermore, outside of the union 
$\mathcal{U}$ of column move rectangles of $\D$ and $\nabla \D$, a pair of corresponding tiles have the same coordinates, since $\D$ and $\nabla \D$ are identical outside of $\mathcal{U}$.


\begin{lemma}
\label{lem:BPD-commute-cases}
Suppose $\E=\nabla\D$, $\pop(\D)=(i,r)$, and $(a,b)$ in $\D$ and $(a',b')$ in $\E$ are corresponding tiles. Suppose $(a,b)$ is not the southernmost tile with an \rt-turn in column $i+1$.
Then there is a sequence of basic Monk moves following Steps (2) and (3) of Definition \ref{def:monk-bpd} starting with a $\mindroop$ at $(a,b)$ in $\D$, and a corresponding sequence (consisting of possibly different number of steps) of basic Monk moves starting at $(a',b')$ in $\E$ such that after applying both sequences of moves, the resulting (almost) bumpless pipe dreams $\widetilde{\D}$ and $\widetilde{E}$ satisfy  $\widetilde{\E} = \nabla \widetilde{\D}$ and $\pop(\widetilde{\D})=(i,r)$. Furthermore, $\widetilde{\D}$ has more basic Monk moves available if and only if $\widetilde{\E}$ does, and these moves are $\mindroop$s at corresponding tiles.
\end{lemma}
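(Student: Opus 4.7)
The plan is to prove this by a carefully synchronized parallel computation on $\D$ and $\E = \nabla\D$, matching up basic Monk moves in blocks. I maintain the following invariant: at the start of each block of moves, the current marked tile $(\mathsf{x},\mathsf{y})$ on the $\D$-side and the current marked tile $(\mathsf{x}',\mathsf{y}')$ on the $\E$-side form a pair of corresponding tiles, and the two (almost) BPDs are related by $\nabla$. Initially, this holds by hypothesis with $(\mathsf{x},\mathsf{y}) = (a,b)$ and $(\mathsf{x}',\mathsf{y}') = (a',b')$. Each block terminates as soon as the invariant is re-established on the new pair of (almost) BPDs; the lemma is then the assertion that some such termination time exists, with the extra property that Monk's rule continues (or halts) on both sides simultaneously.

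First I would dispose of the trivial situation where $(a,b)$ lies outside the union $\mathcal{U}$ of column move rectangles used by the $\nabla$-procedure on $\D$. Since $\D$ and $\E$ agree outside $\mathcal{U}$, we have $(a,b) = (a',b')$ and the $\mindroop$s are literally identical. If the droop also terminates outside $\mathcal{U}$, the invariant is restored after a single move on each side. If it lands inside some rectangle $R_j$, we fall into the main case.

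The main case is when $(a,b)$ lies inside some $R_j$, or when the droop enters one. Here the tiles inside $R_j$ have been reshuffled to form $\E$, so the droops at $(a,b)$ and $(a',b')$ may have different SE corners and different internal rearrangements. I would carry out a finite case analysis based on the tile type immediately southeast of the droop corner, invoking the explicit diagrams (Figures \ref{fig:columnmoves}, \ref{fig:deletemark}, \ref{fig:newcolmoves2}, \ref{fig:newcolmoves3}) that describe standard and almost-BPD column moves. In each subcase one checks that the post-droop configurations either already satisfy $\widetilde{\E} = \nabla \widetilde{\D}$, with the new marked tiles forming a corresponding pair, or that the discrepancy is exactly what is resolved by one additional basic Monk move (a $\cbswap$, or a secondary $\mindroop$ on one side that is recognizable as the next extra column move in the $\nabla$-procedure acting on the updated $\D$). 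The hypothesis that $(a,b)$ is not the southernmost tile with an \rt-turn in column $i+1$ is invoked precisely here to exclude the degenerate subcase in which the droop would consume the \bl-tile anchoring the first column move of $\nabla$; this is also what guarantees $\pop(\widetilde{\D}) = (i,r)$, because no move in a block alters tiles strictly above row $r$ or touches the rightmost \bl-tile of row $r$.

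The main obstacle is the sheer bookkeeping in this case analysis: for each rectangle one must enumerate the possibilities (SE corner \jt, \bt, or \rt), distinguish whether the interacting pipe has already crossed the drooped pipe, whether the droop ends inside or outside $R_j$, and whether a new \bt-tile has migrated into a previously unaffected rectangle. Termination of each block is automatic, because Step (2) of Definition \ref{def:monk-bpd} strictly advances $(\mathsf{x},\mathsf{y})$ to the south and/or east, and the union $\mathcal{U}$ is finite; once the current tile leaves $\mathcal{U}$ the invariant holds trivially. The final clause of the lemma — that a next basic Monk move is available on $\widetilde{\D}$ if and only if one is available on $\widetilde{\E}$, at corresponding tiles — then follows from the invariant together with the fact that the branching in Step (2) of Definition \ref{def:monk-bpd} depends only on the local tile types at the current marked tile, which are matched across corresponding tiles.
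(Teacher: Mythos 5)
Your overall strategy --- a synchronized parallel computation on $\D$ and $\E=\nabla\D$, cut into blocks so that at each block boundary the two (almost) bumpless pipe dreams are again related by $\nabla$ and the marked tiles are corresponding tiles --- is exactly the paper's strategy. But your write-up stops at the point where the actual proof begins: the content of this lemma \emph{is} the case analysis, and you explicitly defer it (``the main obstacle is the sheer bookkeeping''). The paper's proof consists of nine cases (organized by the position of $(a,b)$ relative to the column move rectangles: strictly east of the rectangle $U$ it interacts with, inside $U$, west of $U$, and whether $U$ is the southernmost rectangle), each with subcases, and in each one it must \emph{exhibit} a new decomposition into column move rectangles witnessing $\widetilde{\E}=\nabla\widetilde{\D}$ --- for instance merging two adjacent rectangles $U\cup U'$ into a single larger one $U''$, or shifting $U$ east by one column. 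Nothing in your invariant forces this to work out; it has to be checked configuration by configuration, and that check is missing.

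Two of the shortcuts you propose to avoid this work are also inaccurate. First, you claim each discrepancy is resolved by ``one additional basic Monk move''; in the paper the corresponding blocks are typically \emph{all} consecutive basic Monk moves available in a given column of $\D$ paired with all such moves in an adjacent column of $\E$ (possibly plus a $\cbswap$ and a bump-to-cross replacement), and the two blocks can have genuinely different lengths --- this is already flagged in the lemma statement. Second, your closing claim that the branching of Step (2) of Definition~\ref{def:monk-bpd} ``depends only on the local tile types at the current marked tile, which are matched across corresponding tiles'' is false as stated: corresponding tiles sit in different columns in general (one may be an \rt-tile in column $y$ of $\D$ while its partner is in column $y\pm1$ of $\E$), their tile types need not agree (one may be an \rt-tile and the other a \bt-tile), and whether the next $\mindroop$ produces a \jt-tile or a bump, and whether a $\cbswap$ is needed, depends on non-local data (where the relevant pipes next cross). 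The ``furthermore'' clause of the lemma is therefore itself an output of the case analysis, not a consequence of locality. Finally, the reason $\pop(\widetilde{\D})=(i,r)$ is preserved is not merely that no move touches row $r$; it is that the re-exhibited column move rectangles still start at the same marked \bl-tile and still terminate with the same pair of pipes $i$, $i+1$, which again must be verified per case.
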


\begin{proof}
Let $\mathcal{U}$ be the union of column move rectangles for $\D$ and $\E$. If the basic Monk move at $(a,b)$ affects only tiles in outside of $\mathcal{U}$, then $(a,b) = (a',b')$ and we simultaneously perform the move at $(a,b)$ in $\D$ and $\E$ and get $\widetilde{\D}$ and $\widetilde{\E}$. The next basic Monk moves in $\widetilde{\D}$ and $\widetilde{\E}$ start at the same location.

If the $\mindroop$ at $(a,b)$ affects tiles in $\mathcal{U}$, let $U$ be the column move rectangle that contains $(a,b)$ if it exists; otherwise let $U$ be the column move rectangle that intersects row $a$ closest to $(a,b)$. Notice that the tile $(a,b)$ could be to the left or right of $U$.
We now do a detailed case analysis.  The readers are strongly recommended to refer to the figures while reading. In each accompanying figure of this proof, the vertical arrows are $\nabla$ and the horizontal arrows correspond to sequences of basic Monk moves described in the text; the positions of $\D$, $\E$, $\widetilde{\D}$, and $\widetilde{\E}$ are NW, SW, NE, and SE, respectively. The corresponding tiles in $\D$ and $\E$ where the sequence of basic Monk moves start at are circled.
\vskip 0.5em

\noindent\textbf{Case (1)} (Figure \ref{fig:case1}). If the tile $(a,b)$ is to the right of $U$, the only way a $\mindroop$ at $(a,b)$ could affect a column move rectangle is when $(a,b)$ is the \rt-turn of a pipe $p$   with a \vtile-tile at some $(x,b)$ with $x>a$, where $(x,b)$ is the NE corner of a column move rectangle $U'$.
We perform the $\mindroop$ at $(a,b)$.
\begin{enumerate}[(a)]
\item If this $\mindroop$ does not create a \bt-tile, let the result be $\widetilde{\D}$. 
The only affected tile in $U'$ by this move is $(x,b)$. The corresponding $\mindroop$ move at $(a, b)$ in $\E$ changes the tiles in exactly the same way outside of $U'$, and at $(x,b)$ the $\jt$-tile becomes a \htile-tile. Let $\widetilde{\E}$ be the result after this move. Then $\widetilde{\E}=\nabla\widetilde{\D}$, and the next basic Monk move in $\widetilde{\D}$ is at $(x,b)$, whereas the next move in $\widetilde{\E}$ is at $(x,b-1)$. These are corresponding tiles. 
\item If this $\mindroop$ creates a \bt-tile, we check if a $\cbswap$ is necessary. If not, the situation is similar to (a); otherwise, we perform a $\cbswap$ move and let the result be $\widetilde{\D}$. We also perform the $\mindroop$ at $(a,b)$ in $\E$ followed by a $\cbswap$ and obtain $\widetilde{\E}$. We have $\widetilde{\E}=\nabla \widetilde{\D}$, and the next $\mindroop$s in $\widetilde{\D}$ and $\widetilde{\E}$ are at the corresponding \bt- tiles.
\end{enumerate}
\begin{figure}[htp]
    \centering
    \includegraphics[scale=0.5]{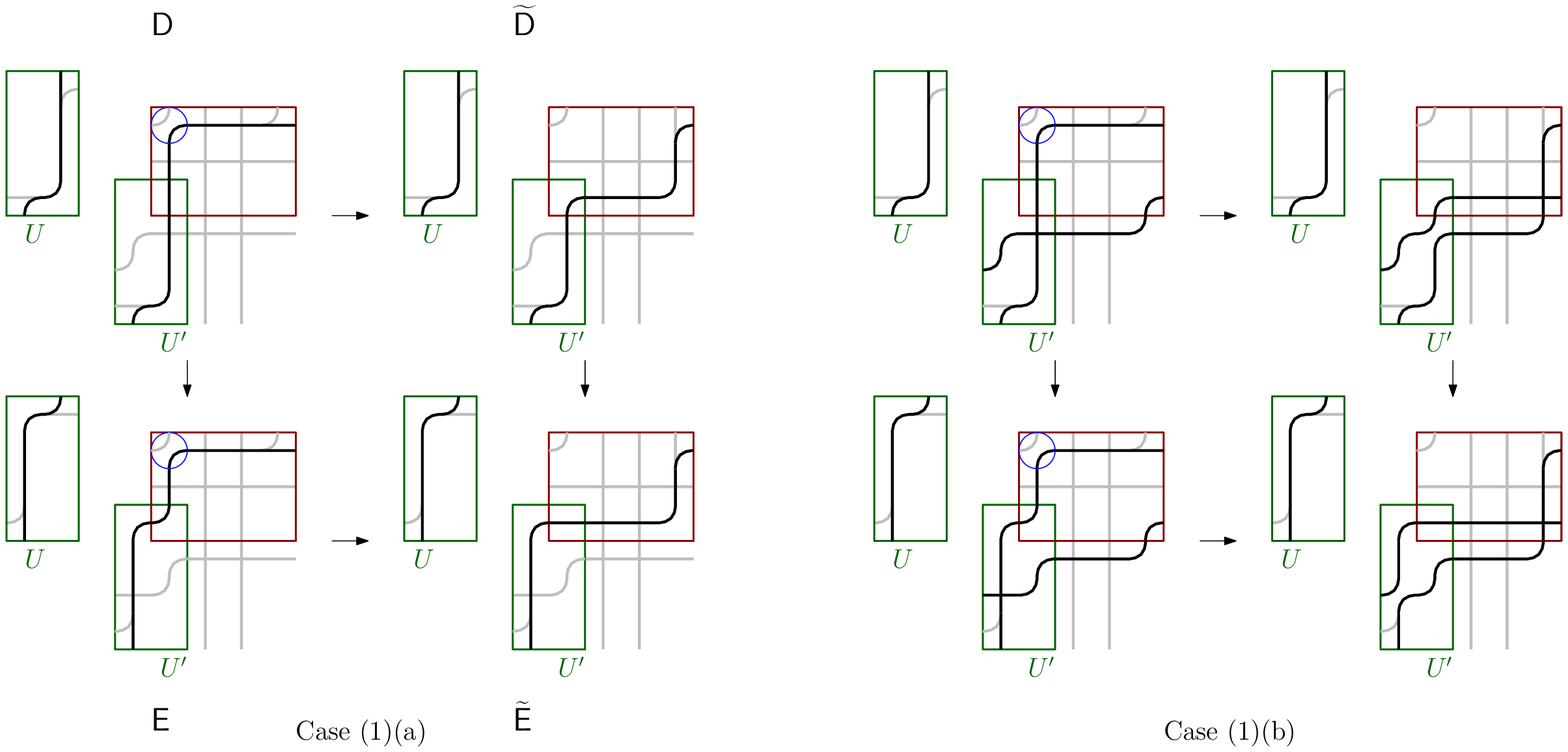}
    \caption{Case (1) of Lemma~\ref{lem:BPD-commute-cases}. In (1)(b) we show the case when the new \bt-tile is in $U'$, but it's possible that the new \bt-tile is outside of a column move rectangle. }
    \label{fig:case1}
\end{figure}

Now we consider Cases (2) to (4) when $(a,b)$ is inside a column move rectangle $U$, and $U$ is not the southernmost column move rectangle. Let $U'$ be the column move rectangle immediately SE of $U$. Let $(x,y)$ be the NE corner of $U$. $U'$ could be the southernmost column move rectangle, but the analysis for this case is similar to the case when $U'$ is not. Therefore we focus attention on the case when $U'$ is not the southernmost column move rectangle. Suppose the pipe that goes through the NE corner of $U$ is $p$ and the pipe that goes through the NE corner of $U'$ is $p'$.
    
    \vskip 0.5em
\noindent\textbf{Case (2)} (Figure \ref{fig:case2}).  The tile $(x,y)$ in $\D$ is an \rt-tile and $(a,b)=(x,y)$. In this case, $(a',b')=(x,y-1)$. There are three subcases to consider:
    \begin{enumerate}[(a)]
        \item $U$ and $U'$ have nonempty intersection, and the pipe $p'$ has an \rt-tile in column $y+1$ that is below row $x$.    First we argue that in this case the width of the rectangle that bounds this $\mindroop$ cannot be larger than 2, because otherwise, the tile $(x,y+1)$ must be a \+-tile, and the pipe that contains the vertical segment of this cross would be forced to cross $p$ twice.

         In $\D$, starting at $(x,y)$, perform all consecutive basic Monk moves that are available in column $y$. At the same time, perform all consecutive $\mindroop$ moves that are available in column $y-1$ in $\E$.
        Pipes $p$ and $p'$ would bump at a $\bt$-tile in $\D$ and $\E$. 
        If $p$ and $p'$ do not intersect, we replace this $\bt$-tile with a \+ tile in $\D$ and $\E$ and let the results be $\widetilde{\D}$ and $\widetilde{\E}$. 
        If $p$ and $p'$ already intersect, we simultaneously perform the $\cbswap$ move in $\D$ and $\E$, and let the results be $\widetilde{\D}$ and $\widetilde{\E}$. Since the column moves run NW to SE and the $\cbswap$ moves modify tiles that are NE-SW to each other, the new \bt-tiles created in $\D$ and $\E$ after the corresponding $\cbswap$ moves must be outside of $\mathcal{U}$, and therefore at the same location. 
        
        Suppose $(z,y+1)$ is the SE corner of $U'$. Then we see that the rectangle $U''$ with $(x,y)$ as the NW corner and $(z,y+1)$ as the SE corner is a column move rectangle. Replacing $U\cup U'$ with $U''$ in $\mathcal{U}$, we see that $\widetilde{\E}=\nabla \widetilde{\D}$.
        
        \item $U$ and $U'$ have nonempty intersection, $(x,y+1)$ is a \+-tile, and $p'$ has an \rt-tile in column $y+1$ of $\D$ that is  above row $x$. In this case, perform all the basic Monk moves available in column $y$ in $\D$. If a \bt-tile is created at the end, perform also the next move that replaces this tile with a \+. Let the result be $\widetilde{\D}$. In $\E$, perform all the basic Monk moves available in column $y-1$.  In this case, $p$ would create a bump with $p'$ at $(x',y)$, the SE corner of $U$. Since $p$ and $p'$ already cross at $(x,y+1)$, a $\cbswap$ needs to be performed, and after this we perform all the basic Monk moves available in column $y+1$. If a \bt-tile is created at the end, perform also the next move that replaces this tile with a \+. Let the result be $\widetilde{\E}$.  Suppose $(z,y+1)$ is the SE corner of $U'$. Let $U''$ be the rectangle with $(x,y)$ as its NW corner and $(z,y+1)$ as its SE corner. Then $U''$ is a column move rectangle. Replacing $U\cup U'$ with $U''$ in $\mathcal{U}$, we see that $\widetilde{\E} = \nabla \widetilde{\D}$. It is also easy to see that the next basic Monk moves in $\widetilde{\D}$ and $\widetilde{\E}$ are at corresponding tiles. 
        
        \item If $U$ and $U'$ do not intersect, we exhaust all the basic Monk moves available in column $y$ in $\D$ and let the result be $\widetilde{\D}$. Correspondingly, we exhaust all moves available in column $y-1$ in $\E$ and let the result be $\widetilde{\E}$.   Let $U''$ be the rectangle obtained by shifting $U$ to the right by one tile. 
        Replacing $\mathcal{U}$ with $(\mathcal{U}\setminus U)\cup U''$, we see that $\widetilde{\E}=\nabla \widetilde{\D}$. It is also easy to see that the next basic Monk moves in $\widetilde{\D}$ and $\widetilde{\E}$ are at corresponding tiles.
         
    \end{enumerate}
    \begin{figure}[htp]
        \centering
        \includegraphics[scale=0.5]{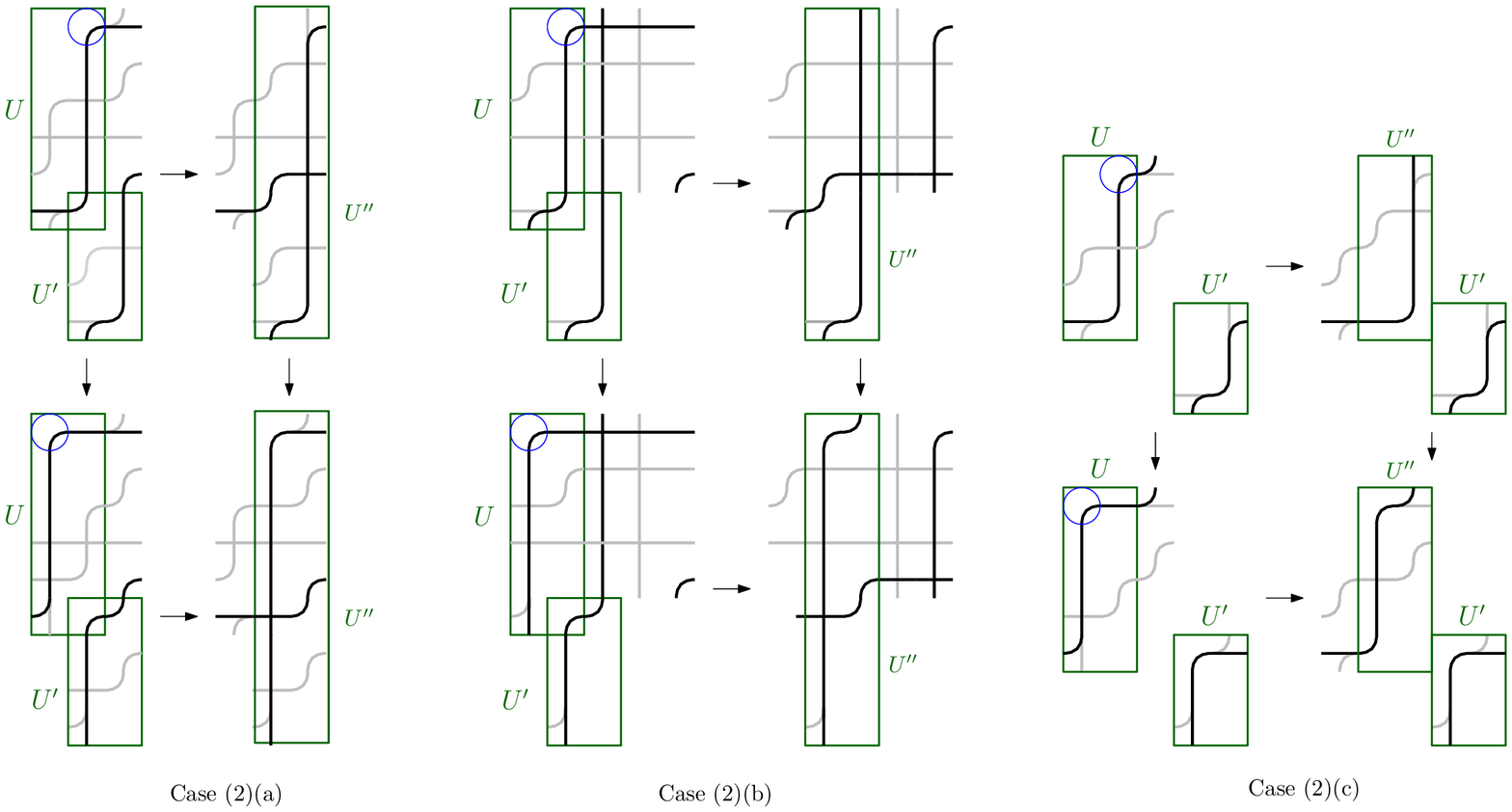}
        \caption{Cases (2) of Lemma~\ref{lem:BPD-commute-cases}. The illustration of (2)(b) omits the  case analysis similar to Case (1).}
        \label{fig:case2}
    \end{figure}

    \noindent\textbf{Case (3)} (Figure \ref{fig:case3}).
    The tile $(a,b)$ in $\D$ is a $\bt$ that previously was a \+ of $p$ and $q$ for some pipe $q$. Then $(a',b')$ is a also  $\bt$ tile that was previously a \+ of $p$ and $q$ in $\E$. The case here analysis is parallel to the three cases in (2), so we omit the details, but include the illustrations.
    \begin{figure}[htp]
        \centering
        \includegraphics[scale=0.5]{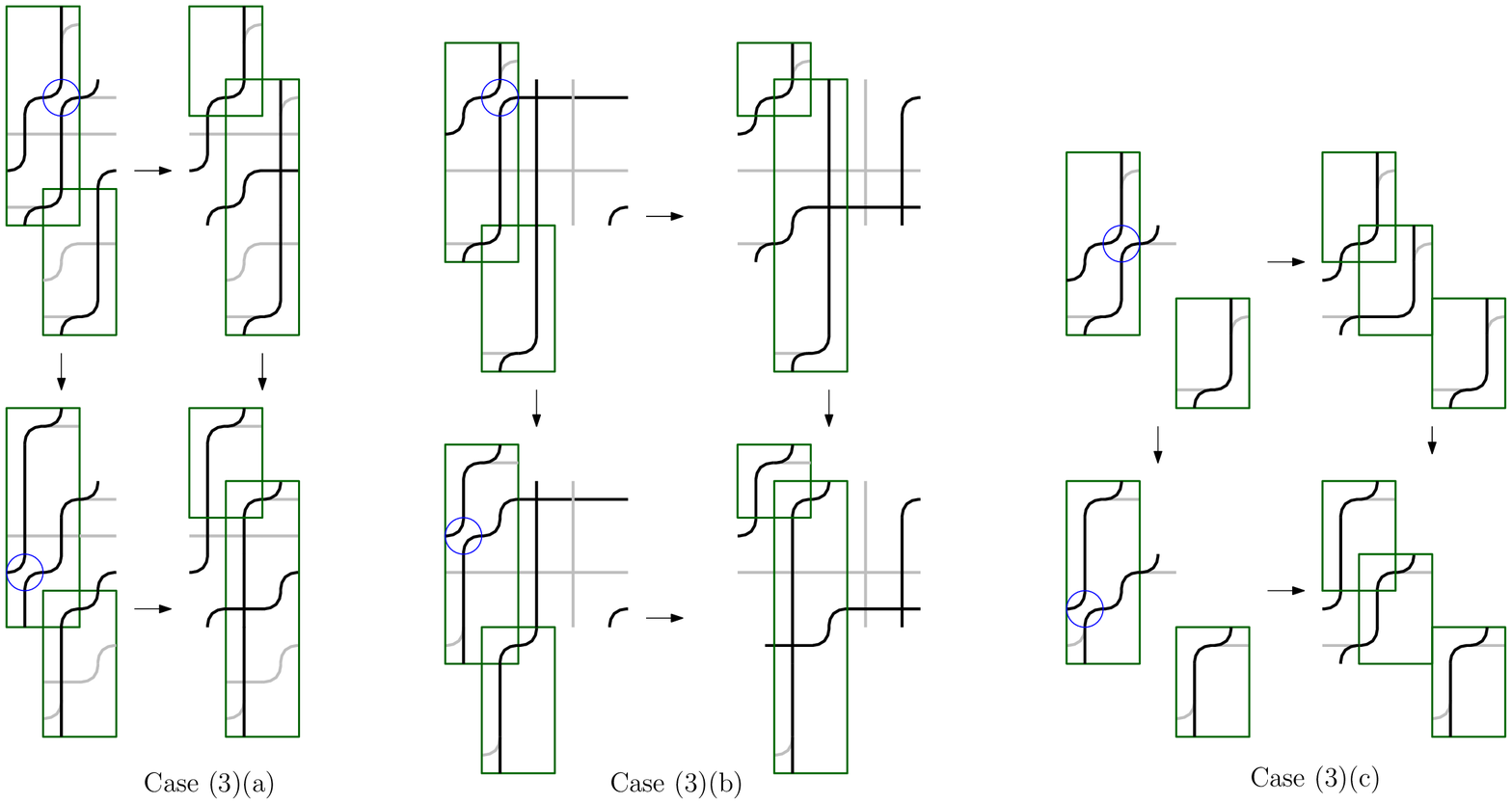}
        \caption{Cases (3) of Lemma~\ref{lem:BPD-commute-cases}.}
        \label{fig:case3}
    \end{figure}
    \vskip 0.5em

    \noindent\textbf{Case (4)} (Figure \ref{fig:case4}.) The \rt-turn at $(a,b)$ inside $U$ belongs to a pipe $q$ intersecting $p$ at $(a,y)$. Here $b=y-1$. In the figure, we show the cases for when $(a,b)$ is an \rt-tile or when $(a,b)$ is a \bt-tile separately. The corresponding $\mindroop$ in $\E$ is at $(a,y)$. In $\D$, exhaust the basic Monk moves in column $y-1$, and if at the end a \bt-tile is created, perform the move that replaces it with a \+-tile. In the meantime, exhaust the basic Monk moves in column $y$ of $\E$, and if at the end a \bt-tile is created, perform the move that replaces it with a \+-tile. These moves preserve the fact that $U$ is a column move rectangle. Let the results be $\widetilde{\D}$ and $\widetilde{\E}$. We have $\widetilde{\E}=\nabla \widetilde{\D}$. The next $\mindroop$s in $\widetilde{\D}$ and $\widetilde{\E}$, if exist, are at corresponding tiles. 
    \begin{figure}[htp]
        \centering
        \includegraphics[scale=0.5]{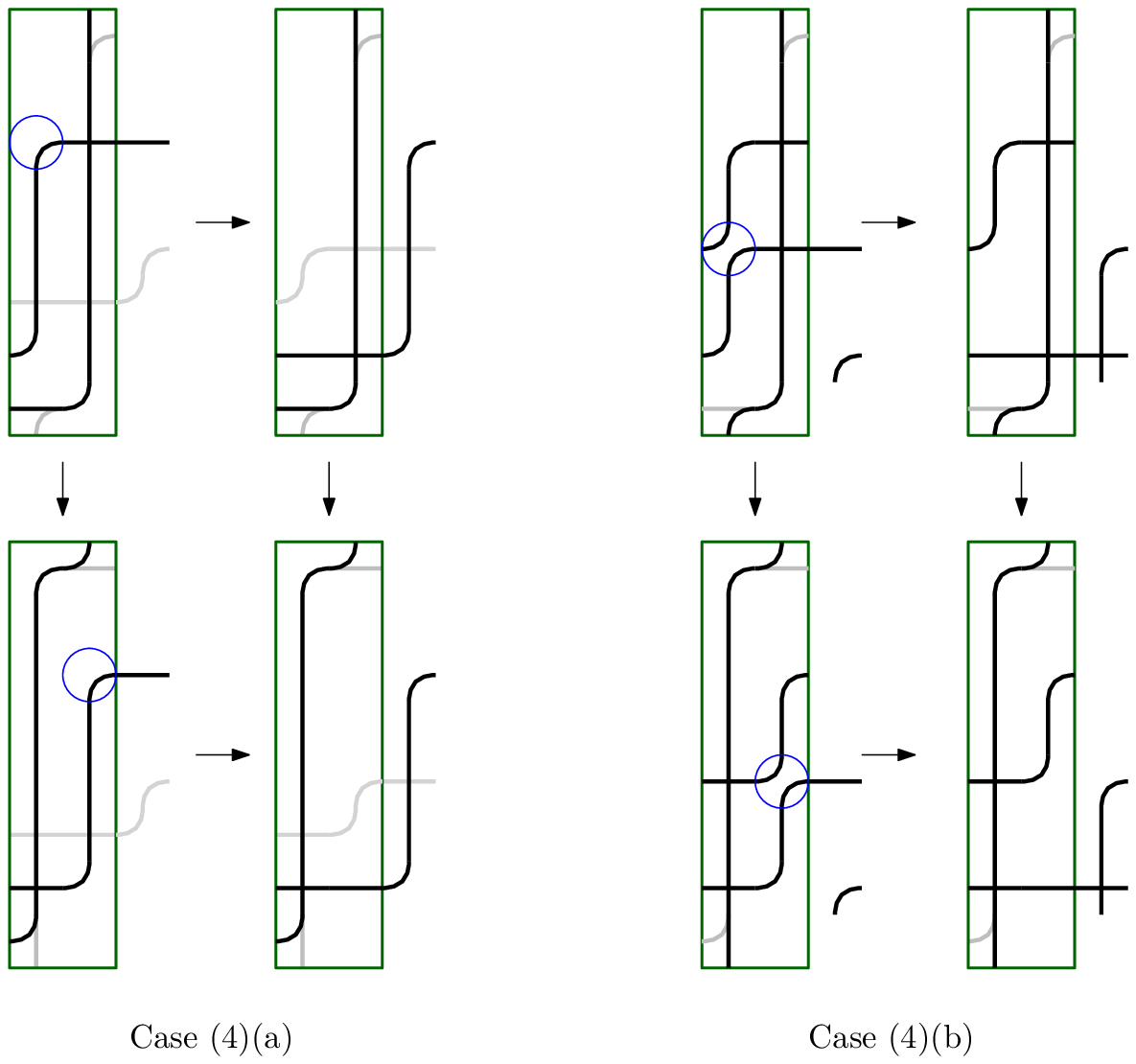}
        \caption{Case (4) of Lemma~\ref{lem:BPD-commute-cases}}
        \label{fig:case4}
    \end{figure}

    \vskip 0.5em
    
    \noindent We now consider the Cases (5) and (6) when $(a,b)$ is to the left of $U$, and $U$ is not the southernmost column move rectangle. 
    \vskip 0.5em
    \noindent\textbf{Case (5)} (Figure \ref{fig:case5}.)
    The SW corner $(x',y')$ of $U$ in $\D$ is a $\htile$-tile and $a=x'$, $b<y'$. We also have $(a',b')=(a,b)$. We perform a single $\mindroop$ at $(a,b)$ in both $\D$ and $\E$. Again, if a $\bt$-tile is created, we perform also the move that replaces it with a \+-tile. The results are $\widetilde{\D}$ and $\widetilde{\E}$.
    \begin{figure}[htp]
        \centering
        \includegraphics[scale=0.5]{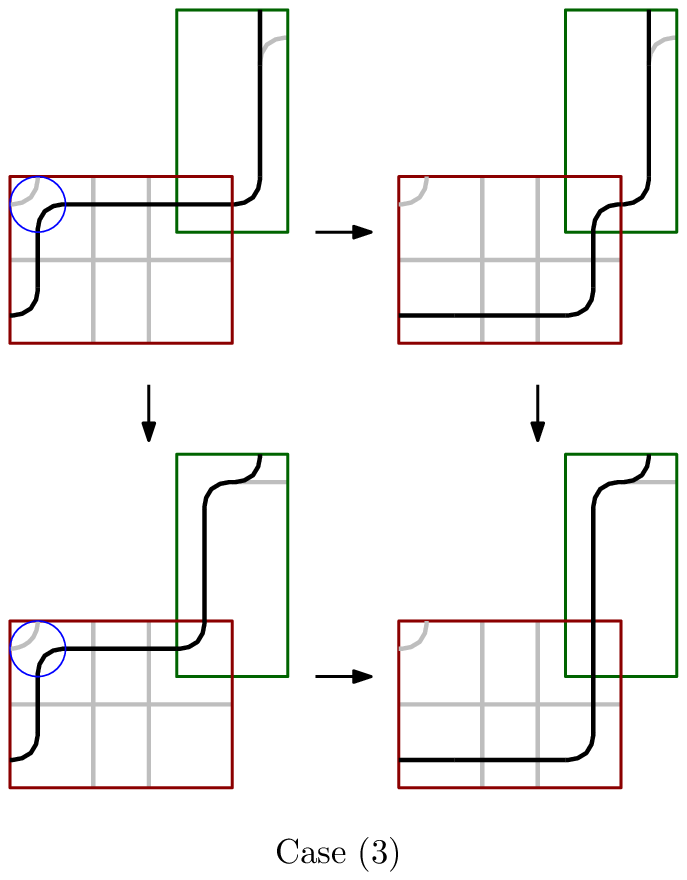}
        \caption{Case (5) of Lemma~\ref{lem:BPD-commute-cases}}
        \label{fig:case5}
    \end{figure}
    \vskip 0.5em
    
\noindent\textbf{Case (6)} (Figure \ref{fig:case6}).
The \rt-turn at $(a,b)$  belongs to a pipe $q$ intersecting $p$ at $(a,y)$, and $(a,b)$ is outside of $U$. Pipe $q$ intersects $p$ at $(a,y-1)$ in $\E$ and the corresponding $\mindroop$ in $\E$ is also at $(a,b)$. We exhaust all basic Monk moves in column $b$ in $\D$, and at the same time exhaust all basic Monk moves in column $b$ in $\E$. If this does not create a \bt-tile with pipe $p$, let the results be $\widetilde{\D}$ and $\widetilde{\E}$. $U$ remains a column move rectangle and $\widetilde{\E}=\nabla \widetilde{\D}$. This is illustrated as Case (6)(a). If a \bt-tile of $p$ and $q$ is created, we have again three cases to consider parallel to Case  (3). these are illustrated as Case (6), (b)--(d).
\vskip 0.5em
\begin{figure}[h!]
    \centering
    \includegraphics[scale=0.5]{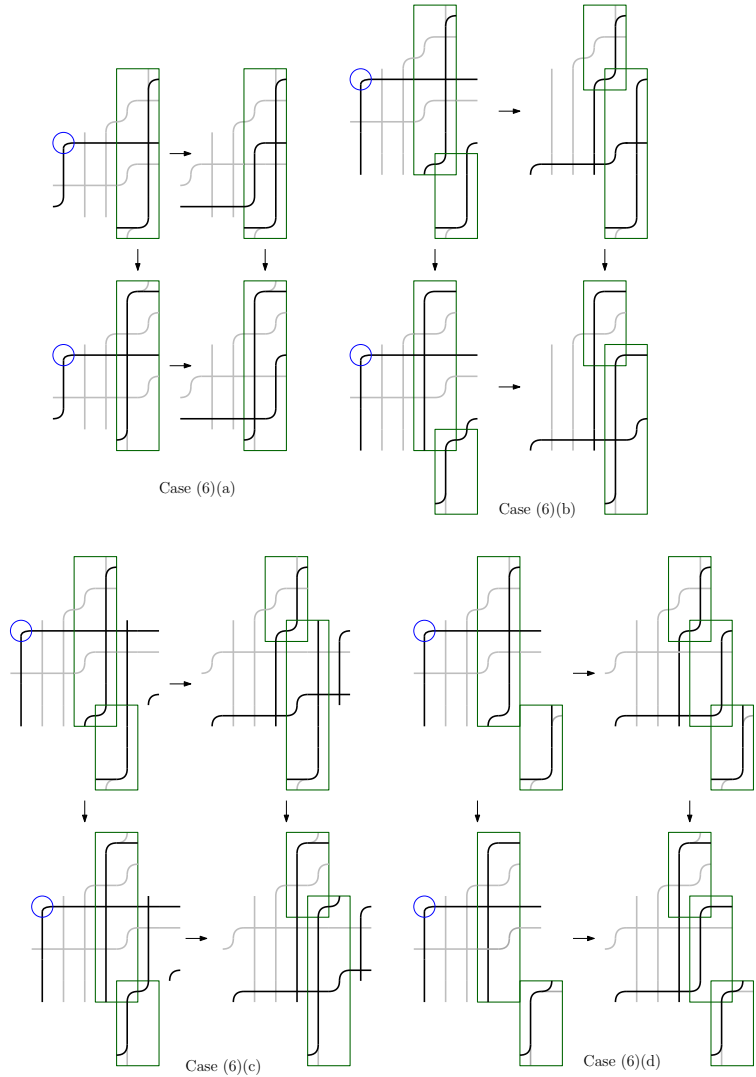}
    \caption{Case (6) of Lemma~\ref{lem:BPD-commute-cases}}
    \label{fig:case6}
\end{figure}
\noindent Now consider Cases (7) and (8) when $U$ is the southernmost column move rectangle. Recall from the assumption of the lemma that $(a,b)$ is not the southernmost \rt-turn in the east-side column of $U$. 

\vskip 0.5em

    \noindent\textbf{Case (7)} (Figure \ref{fig:case7}). Suppose $b=i$ and $(a,b)$ is the southernmost tile in column $i$ that contains an \rt-turn. Then $(a',b')=(a,i+1)$.
    We simultaneously exhaust the basic Monk moves in column $i$ of $\D$ and in column $i+1$ of $\E$. The last move in  either sequence must be a $\mindroop$ that creates a bump at $(c,j)$, the SE corner of the both $\mindroop$ moves. The next move replaces this bump with a \+-tile, and let the results be $\widetilde{\D}$ and $\widetilde{\E}$. We give separate illustrations for $(a,b)$ is an \rt-tile versus a \bt-tile.
    
    \begin{figure}
        \centering
        \includegraphics[scale=0.5]{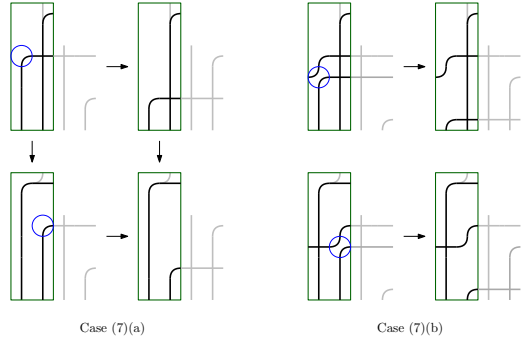}
        \caption{Case (7) of Lemma~\ref{lem:BPD-commute-cases}}
        \label{fig:case7}
    \end{figure}
    \vskip 0.5em
    
   \noindent\textbf{Case (8)} Suppose $b=i$ and $(a,b)$ is above the southernmost \rt-tile in column $i$. This is similar to Case (4).
   \vskip 0.5em
 
 \noindent Finally, we consider the cases when $U$ is the southernmost column move rectangle and $(a,b)$ is left of $U$.
 
 \vskip 0.5em
 \noindent 
    \noindent\textbf{Case (9)} (Figure \ref{fig:case9}.) These cases are similar to (6)(a). We omit the details, but include the illustration for reference.
    \begin{figure}[htp]
        \centering
        \includegraphics[scale=0.5]{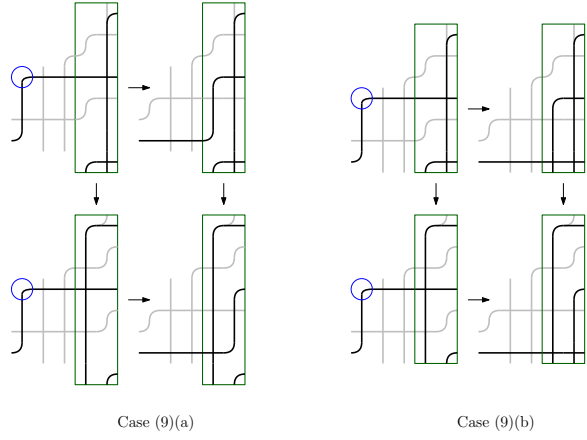}
        \caption{Case (9) of Lemma~\ref{lem:BPD-commute-cases}}
        \label{fig:case9}
    \end{figure}
\end{proof}

\begin{lemma}\label{lem:BPD-induction-main}
After replacing $\PD$'s by $\BPD$'s, the statements in Lemma~\ref{lem:PD-induction-main} hold.
\end{lemma}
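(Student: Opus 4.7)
The plan is to mirror the structure of the proof of Lemma~\ref{lem:PD-induction-main}, but replace its local reasoning about \+-tiles with parallel execution of basic Monk moves on $D$ and $\nabla D$, using Lemma~\ref{lem:BPD-commute-cases} as the workhorse. In each case of the lemma, I will run the sequence of basic Monk moves comprising $m_{s,\beta}$ (resp.\ $x_\alpha{\rightsquigarrow}$) on $D$ and, simultaneously, on $\nabla D$, starting at a pair of corresponding tiles. Lemma~\ref{lem:BPD-commute-cases} then cuts both sequences into corresponding subsequences that maintain the relation $\nabla D_k = E_k$ at every checkpoint $(D_k,E_k)$, as long as no basic Monk move starts at the southernmost \rt-turn in column $i+1$ of the last column move rectangle of $\nabla$. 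Hence, if no such exceptional step ever occurs, we immediately obtain $\pop(m_{s,\beta}(D))=(i,r)$ and $\nabla(m_{s,\beta}(D)) = m_{s,\beta}(\nabla D)$, which matches the ``generic'' branch in the statement.

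For Case (1)(a), starting at the \+-tile between pipes $\pi(s)$ and $\pi(\beta)$ in $D$ gives a pair of corresponding tiles in $D$ and $\nabla D$ precisely because $(s,\beta)\neq(\pi^{-1}(i+1),\pi^{-1}(i))$. We iterate Lemma~\ref{lem:BPD-commute-cases} until either $m_{s,\beta}$ terminates in both pipe dreams in parallel (giving the $i\notin\DES_L(\rho)$ branch), or an exceptional move occurs. In the exceptional case, a direct inspection of the two possible geometric configurations yields the remaining two branches: if the exceptional move is the very last one of $m_{s,\beta}$ on $D$ and produces a new crossing of pipes $i$ and $i{+}1$ above the previous first \bl-tile, then $m_{s,\beta}(\nabla D)$ stops one step earlier than $m_{s,\beta}(D)$, so $\pop(m_{s,\beta}(D))=(i+1,r)$ and $\nabla(m_{s,\beta}(D)) = m_{s,\beta}(\nabla D)$, matching $i\in\DES_L(\rho)$, $i+1\notin\DES_L(\rho)$; if instead the exceptional move creates a cross between pipes $i+1$ and $i+2$ that requires continued propagation in $D$ but not in $\nabla D$, this propagation is precisely the sequence of basic Monk moves comprising $m_{\rho^{-1}(i+2),\rho^{-1}(i+1)}$ applied to $m_{s,\beta}(\nabla D)$, giving $\nabla(m_{s,\beta}(D)) = m_{\rho^{-1}(i+2),\rho^{-1}(i+1)}(m_{s,\beta}(\nabla D))$ in the branch $i,i+1\in\DES_L(\rho)$. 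Case (1)(b) is the degenerate situation where the initial replacement of $m_{s,\beta}$ targets the unique \+-tile at the bottom of the last column move rectangle of $\nabla$; here we bypass Lemma~\ref{lem:BPD-commute-cases} and read off $\pop(m_{s,\beta}(D))=(i+1,r)$ directly from Definition~\ref{def:monk-bpd}, then argue as in the two subcases of exceptional moves above to recover the statements about $\nabla(m_{s,\beta}(D))$.

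Case (2)(a) runs identically to (1)(a) after replacing the initial step by the insertion of $x_\alpha{\rightsquigarrow}$ at the easternmost \rt-tile in row $\alpha$; this tile exists in both $D$ and $\nabla D$ and gives a pair of corresponding tiles since $\alpha\geq r$. Case (2)(b) is immediate: when $\alpha<r$, the row $\alpha$ has no \+-tile at all in $D$, so the initial step of $x_\alpha{\rightsquigarrow}$ creates a new \bl-tile at $(\alpha,\alpha)$ that becomes the topmost \bl-tile, and the subsequent $\mindroop$/$\cbswap$ sequence rebuilds $D$ verbatim below row $\alpha$; hence $\pop(x_\alpha{\rightsquigarrow} D) = (\alpha,\alpha)$ and $\nabla(x_\alpha{\rightsquigarrow} D)=D$. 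The main obstacle I anticipate is the verification, in the exceptional step of Case (1)(a) and (2)(a), that the global descent conditions $i\in\DES_L(\rho)$ and $i+1\in\DES_L(\rho)$ genuinely correspond to the two local geometric configurations of pipes $i,i+1,i+2$ in the last column move rectangle just before the exceptional move. This amounts to a case analysis parallel to the nine cases of Lemma~\ref{lem:BPD-commute-cases}, but now with the additional bookkeeping that the last column move rectangle itself can be affected by the exceptional move; reducedness of the resulting bumpless pipe dream, together with the fact that an exceptional move involves either pipes $i,i+1$ or pipes $i+1,i+2$, will be what pins down the correspondence.
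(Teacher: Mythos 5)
Your proposal is correct and follows essentially the same route as the paper: pass to the almost bumpless pipe dreams obtained by turning the relevant \+-tile into a \bt-tile, run the two Monk computations in parallel via repeated application of Lemma~\ref{lem:BPD-commute-cases}, and branch at the exceptional step (the \mindroop{} at the southernmost \rt-turn in column $i+1$ of the last column move rectangle), with the three outcomes matching $i\notin\DES_L(\rho)$; $i\in\DES_L(\rho)$, $i+1\notin\DES_L(\rho)$; and $i,i+1\in\DES_L(\rho)$ exactly as in the paper's Cases (1)(a)(i)--(ii). The residual verification you flag at the end is precisely the subcase analysis the paper carries out (splitting on whether pipes $i+1$ and $i+2$ cross in column $i+2$ or further east), so no new idea is missing.
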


\begin{proof} We give a proof for Case (1); Case (2) is similar.
\vskip 0.5em

\noindent\textbf{Case (1)(a).} 
Let $\D$ be the almost bumpless pipe dream of $\pi$ obtained from $D$ by replacing the crossing of pipes $\pi t_{s,\beta}(s)$ and $\pi t_{s,\beta}(\beta)$  with a \bt-tile, and let $\E=\nabla \D$. $\E$ is the almost bumpless pipe dream of $s_i\pi$ obtained from $\nabla D$ by replacing the crossing of pipes $s_i\pi t_{s,\beta}(s)$ and $s_i\pi t_{s,\beta}(\beta)$ in with a \bt-tile. 

Consider simultaneously the process of computing $m_{s,\beta}(D)$ and $m_{s,\beta}(\nabla D)$ by modifying $\D$ and $\E$ with basic Monk moves, and let $\D_0:=\D$ and $\E_0:=\E$. Applying Lemma \ref{lem:BPD-commute-cases} repeatedly, for each timestep $d=0,1,2,\cdots$, we let $\D_{d+1}:=\widetilde{\D_d}$ and $\E_{d+1}:=\widetilde{\E_d}$. Let $\mathcal{U}_d$ be the union of column move rectangles where $\D_d$ and $\E_d$ differ for each $d$, and let $U_d$ denote the southernmost one in each $\mathcal{U}_t$. We stop at step $t$ when either we encounter a $\mindroop$ at the southernmost \rt-turn $(a,i+1)$ in column $i+1$ of $\D$, in which case the corresponding $\mindroop$ is at the southernmost \rt-turn $(a',i)$ in column $i$ of $\E$, or such moves never occur and the computation of Monk's rule is complete. In the latter case, $\nabla(m_{s,\beta}(D))=m_{s,\beta}(\nabla D)$, $\pop(m_{s,\beta}(D))=(i,r)$, and $i\not\in \DES_L(\rho)$. We now do a careful analysis of the former case. 

We consider the case when 
$(a,i+1)$ in $\D_t$ is an \rt-tile where the case for when it is a \bt-tile is similar. In this case $a'=a$. We exhaust the basic Monk moves in column $i$ of $\E_t$ starting at $(a,i)$. When this is done, pipes $i$ and $i+1$ create a \bt-tile. Since these two pipes have not crossed before, replacing this $\bt$-tile with a \+-tile terminates the algorithm. Let the result be $\E_{t+1}$, so $\E_{t+1}=m_{s,\beta}(\nabla(\D))$. Notice that in this case $\rho=\pi$, and in particular $i\in \DES_L(\rho)$. 

If $i+1\not\in\DES_L(\pi)$, it must be the case that the southernmost \rt-tile in $\D_t$ of column $i+2$ is below row $a$, and pipes $i+1$ and $i+2$ do not cross. Therefore, after exhausting the basic Monk moves in column $i+1$ starting at $(a,i+1)$, pipes $i+1$ and $i+2$ create a \bt-tile, and we are done after we replace it with a \+-tile. Let the result be $\D_{t+1}$. The southernmost column move rectangle in $\D_{t+1}$ is $U_t$ shifted to the right by one tile. Therefore, $\nabla(m_{s,\beta}(D))=m_{s,\beta}(\nabla D)$, $\pop(m_{s,\beta}(D))=(i+1,r)$.

If $i+1\in \DES_L(\pi)$, we consider two cases:
\begin{enumerate}[(i)]
    \item The tile $(a,i+2)$ in $\D_t$ is a \+-tile where pipes $i+1$ and $i+2$ cross. Let $\D'_{t+1}$ be the computed from $\D_t$ by exhausting all basic Monk moves in column $i+1$. As a result, the southernmost column move rectangle in $\D'_{t+1}$ is $U_t$ shifted to the right by one tile. Meanwhile, in $\E_{t+1}$, the \+-tile of $i+1$ and $i+2$ is at $(a,i+2)$. Replacing it with a \bt-tile and exhausting the basic Monk moves in column $i+2$ gives $\E'_{i+1}$ so that $\nabla \D'_{t+1}=\E'_{i+1}$. We see that the rest of basic Monk moves that finish the computation of $m_{s,\beta}(D)$ and $m_{\pi^{-1}(i+2),\pi^{-1}(i+1)}(m_{s,\beta}(\nabla D))$ do not affect the union of column move rectangles. This case is illustrated in Figure \ref{fig:bpd1a_generic1}.

\begin{figure}[h!]
    \centering
    \includegraphics[scale=0.49]{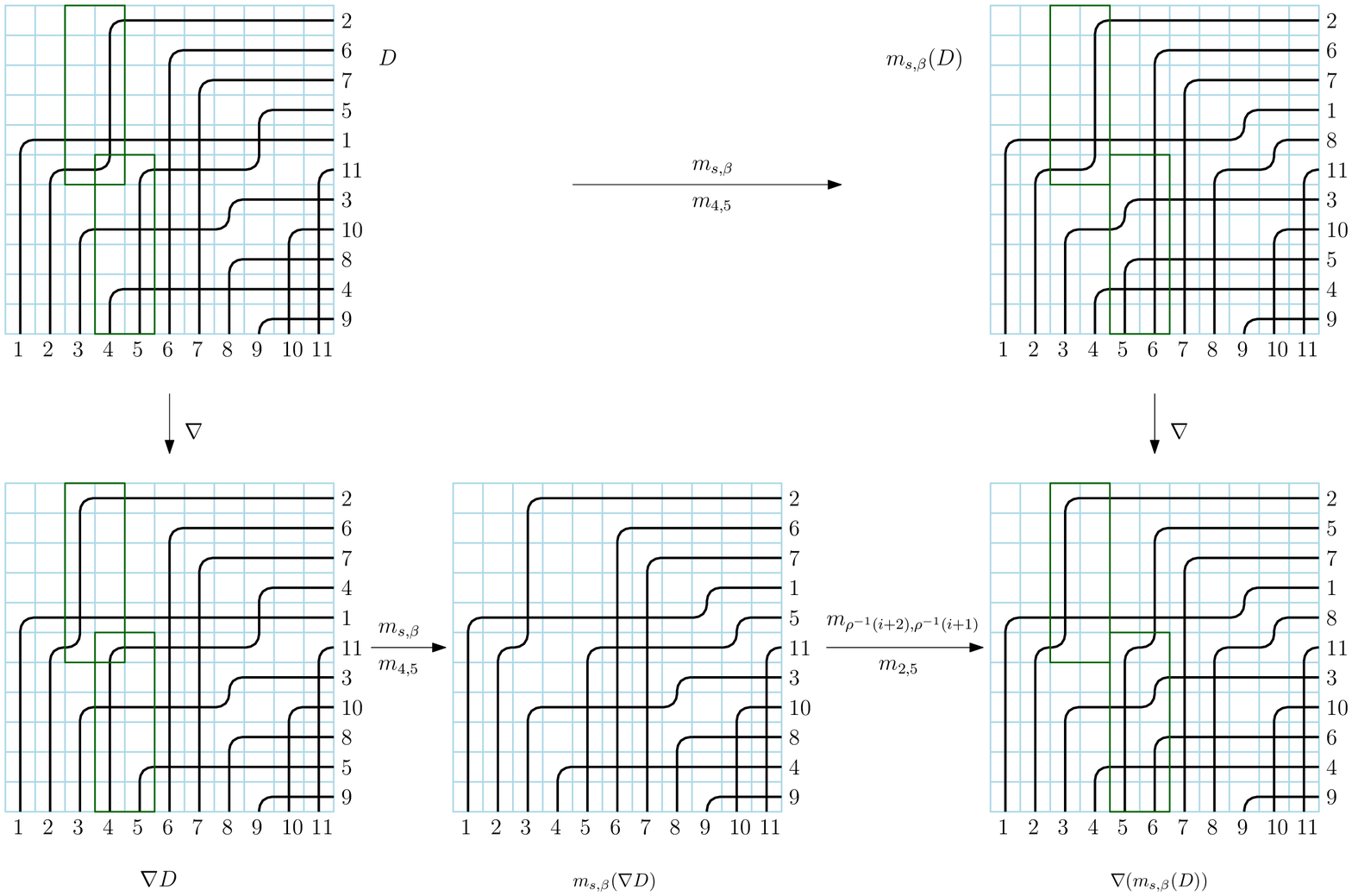}
    \caption{Case (1)(a)(i) of Lemma \ref{lem:BPD-induction-main}. }
    \label{fig:bpd1a_generic1}
\end{figure}

    \item In $\D_t$, pipes $i+1$ and $i+2$ cross in a column $j>i+2$. Let $\D'_{t+1}$ be obtained from $\D_t$ by exhausting the basic Monk moves in column $i+1$, followed by a $\cbswap$. Let $U_{t+1}$ be $U_t$ shifted to the right by one unit.
    At this point, $\D'_{t+1}$ and $\E_{t+1}$ differ within $U_{t+1}$ by a column move. In $\E_{t+1}$ replace the \+-tile of $i+1$ and $i+2$ in with a \bt-tile and let the result be $\E'_{t+1}$. Then $\nabla \D'_{t+1}=\E'_{t+1}$. Again, the remaining basic Monk moves only affect tiles outside of the union of column move rectangles. This case is illustrated in Figure \ref{fig:bpd1a_generic2}.
    
    \begin{figure}[h!]
    \centering
    \includegraphics[scale=0.49]{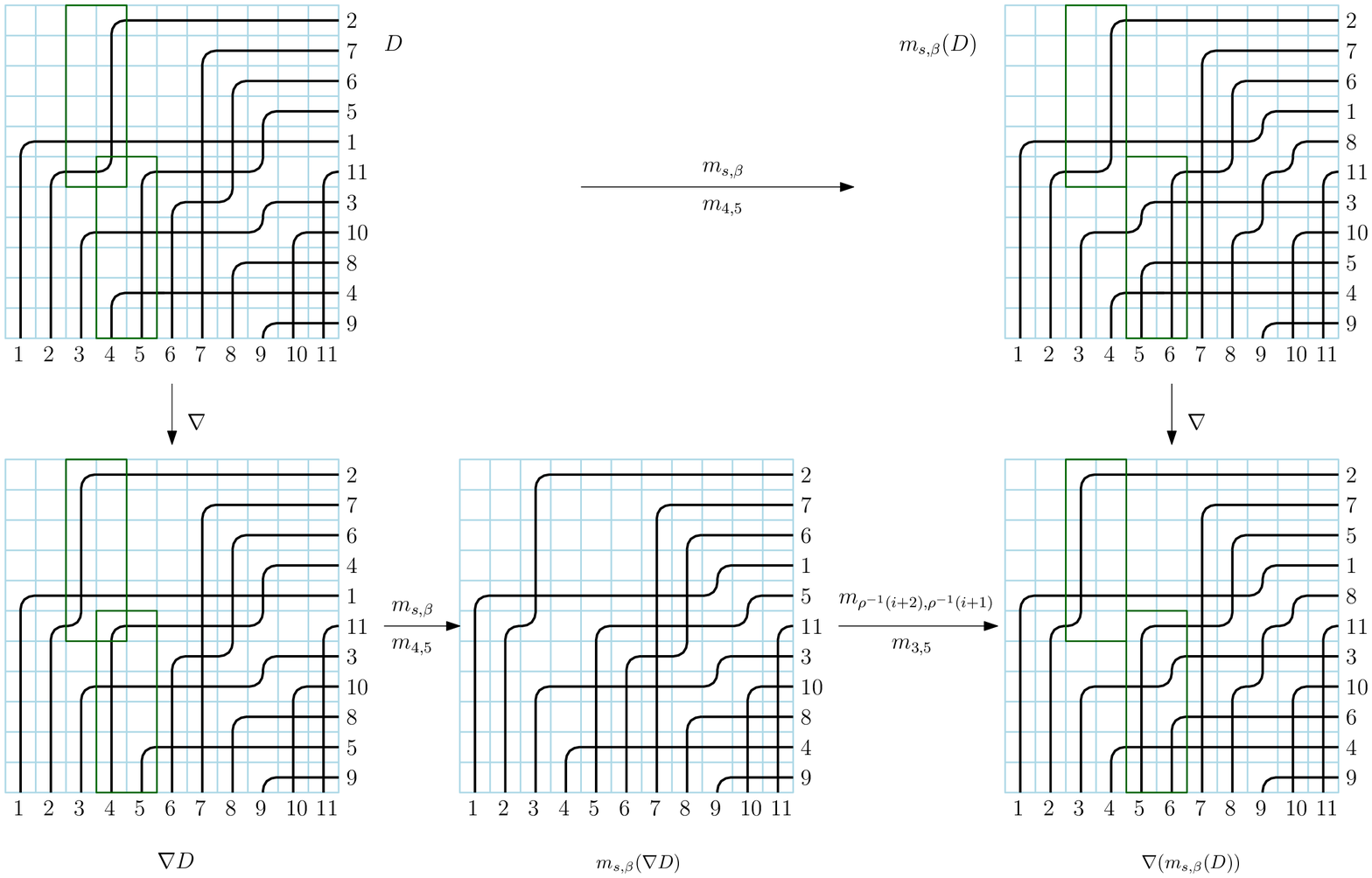}
    \caption{Case (1)(a)(ii) of Lemma \ref{lem:BPD-commute-cases}}.
    \label{fig:bpd1a_generic2}
\end{figure}
    
\end{enumerate}
In both cases, $\nabla(m_{s,\beta}(D))=m_{\rho^{-1}(i+2),\rho^{-1}(i+1)}(m_{s,\beta}(\nabla D))$ and $\pop(m_{s,\beta(D)})=(i+1,r)$.
\vskip 0.5em

\noindent\textbf{Case (1)(b).}  Let $\D$ be the almost bumpless pipe dream of $\pi$ obtained by replacing the \+-tile at $(a, i+1)$ of pipes $i$ and $i+1$ with a \bt-tile. In this case $\perm(\D)=s_i\pi=\rho$. 

If $i+1\not\in \DES_L(\rho)$, then it must be the case that in $\D$, there is an \rt-tile $(b,i+2)$ of pipe $i+2$  for some $b>a$, and $i+2$ does not cross with $i+1$. Therefore, exhausting the basic Monk moves in column $i+1$ of $\D$ and then replace the \bt-tile at $(b,i+2)$ with a \+-tile completes the computation $m_{s,\beta}(D)$. We see that in this case $\pop(m_{s,\beta}(D))=(i+1,r)$ and $\nabla(m_{s,\beta}(D))=\nabla D$. See Figure \ref{fig:bpd1a_special} for an illustration of this case. 

\begin{figure}[h!]
    \centering
    \includegraphics[scale=0.45]{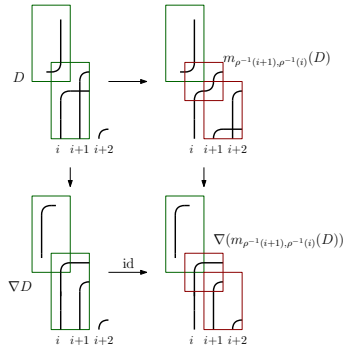}
    \caption{Case (1)(b) of Lemma \ref{lem:BPD-induction-main}.}
    \label{fig:bpd1a_special}
\end{figure}

If $i+1\in \DES_L(\rho)$, we may again consider two cases depending on whether the \+-tile of pipes $i+1$ and $i+2$ in $\D$ is in column $i+2$. The argument is similar to the parallel cases in (1)(a), so we omit the details.
\end{proof}

\section*{Acknowledgements}
We thank Allen Knutson and Alex Yong for helpful conversations.

\vskip 3em

\bibliographystyle{plain}
\bibliography{main}
\end{document}